\pdfoutput=1
\documentclass[a4paper,reqno,a4paper]{amsart}
\usepackage[T1]{fontenc}
\usepackage[utf8]{inputenc}
\setcounter{tocdepth}{1}
\usepackage{color}
\usepackage{prettyref}
\usepackage{units}
\usepackage{mathrsfs}
\usepackage{mathtools}
\usepackage{amstext}
\usepackage{amsthm}
\usepackage{amssymb}
\usepackage{stmaryrd}
\pagestyle{headings}
\usepackage{microtype}
\usepackage[bookmarks=false,
 breaklinks=true,pdfborder={0 0 0},pdfborderstyle={},backref=false,colorlinks=true]
 {hyperref}
\hypersetup{
 pdfstartview=FitV,linkcolor=black,citecolor=black}

\makeatletter

\pdfpageheight\paperheight
\pdfpagewidth\paperwidth

\numberwithin{equation}{section}
\numberwithin{figure}{section}
\theoremstyle{plain}
\newtheorem{thm}{\protect\theoremname}[section]
\theoremstyle{remark}
\newtheorem{rem}[thm]{\protect\remarkname}
\theoremstyle{plain}
\newtheorem{lem}[thm]{\protect\lemmaname}
\theoremstyle{plain}
\newtheorem{cor}[thm]{\protect\corollaryname}
\theoremstyle{plain}
\newtheorem{prop}[thm]{\protect\propositionname}
\theoremstyle{definition}
\newtheorem{defn}[thm]{\protect\definitionname}
\theoremstyle{definition}
\newtheorem{example}[thm]{\protect\examplename}

\usepackage{bbm}
\usepackage{pgfplots}
\usepgfplotslibrary{colormaps,fillbetween}
\pgfplotsset{compat=1.15}
\usepackage{mathrsfs}
\usetikzlibrary{arrows}
 
\usepackage{oldgerm}


\usepackage{txfonts}
\usepackage{textcomp}
\usepackage{xcolor}
\usepackage{dsfont}
\usepackage{graphicx}
\usepackage{caption}
\usepackage{subcaption}
\usepackage{float}
\usepackage{tikz}
\usepackage[foot]{amsaddr}
\usepackage{enumitem}

\renewcommand{\d}{\mathrm{d}}
\renewcommand{\rho}{\varrho}

\newcommand{\1}{\mathbbm{1}}

\newcommand{\e}{\mathrm{e}} 
\newcommand{\D}{\mathcal{D}}
\newcommand{\DD}{{\mathcal{S}}}

\DeclareMathOperator{\supp}{supp}
\DeclareMathOperator{\card}{card}

\renewcommand{\tilde}{\widetilde}
\renewcommand{\emptyset}{\varnothing}
\renewcommand{\phi}{\varphi}
\renewcommand{\epsilon}{\varepsilon}
\def\N{\mathbb N}
\def\d{\;\mathrm d}
\def\R{\mathbb{R}}
\def\Z{\mathbb{Z}}

\def\Q{\mathcal{Q}}

\def\J{\mathfrak{J}}
\def\GL{\tau}
 
\definecolor{lime}{HTML}{A6CE39}
\DeclareRobustCommand{\orcidicon}{%
	\begin{tikzpicture}
	\draw[lime, fill=lime] (0,0) 
	circle [radius=0.16] 
	node[white] {{\fontfamily{qag}\selectfont \tiny ID}};
	\draw[white, fill=white] (-0.0625,0.095) 
	circle [radius=0.007];
	\end{tikzpicture}
	\hspace{-2mm}
}

\foreach \x in {A, ..., Z}{%
	\expandafter\xdef\csname orcid\x\endcsname{\noexpand\href{https://orcid.org/\csname orcidauthor\x\endcsname}{\noexpand\orcidicon}}
}

\AtBeginDocument{
\DeclareFieldFormat[article]{title}{{#1}}
\DeclareFieldFormat[phdthesis]{citetitle}{{\em #1}}
\DeclareFieldFormat[unpublished]{title}{{#1}}
\DeclareFieldFormat[proceedings]{title}{{#1}}
\DeclareFieldFormat[incollection]{title}{{#1}}
\DeclareFieldFormat[unpublished]{title}{{\em #1}}
\renewbibmacro{in:}{}
\DeclareFieldFormat[article]{pages}{#1}
}

\newrefformat{prop}{Proposition~\ref{#1}}
\newrefformat{rem}{Remark~\ref{#1}}
\newrefformat{fact}{Fact~\ref{#1}}
\newrefformat{exa}{Example~\ref{#1}}
\newrefformat{cor}{Corollary~\ref{#1}}
\newrefformat{assu}{Assumption~\ref{#1}}
\newrefformat{subsec}{Section~\ref{#1}}
\newrefformat{sec}{Section~\ref{#1}}
\newrefformat{def}{Definition~\ref{#1}}
\newrefformat{Algo}{Algorithm~\ref{#1}}

\newcommand{\q}{\mathfrak{q}}

\makeatother

\usepackage[style=authoryear,backend=bibtex,style=alphabetic,  backref=false, giveninits=true,citestyle=alphabetic, natbib=false, url=false, isbn=false,doi=true,eprint=false,maxbibnames=99]{biblatex}
\providecommand{\corollaryname}{Corollary}
\providecommand{\definitionname}{Definition}
\providecommand{\examplename}{Example}
\providecommand{\lemmaname}{Lemma}
\providecommand{\propositionname}{Proposition}
\providecommand{\remarkname}{Remark}
\providecommand{\theoremname}{Theorem}

\addbibresource{../../bib.bib}
\addbibresource{C:/Users/user/Documents/GibHubNeu/aljoscha-niemann/bib.bib}
\begin{document}
\title[Exact asymptotic order for adaptive approximations]{Exact asymptotic order for generalised adaptive approximations}
\author{Marc Kesseböhmer \orcidA{}}
\email{mhk@uni-bremen.de}
\author{Aljoscha Niemann}
\email{niemann1@uni-bremen.de}
\begin{abstract}
In this note, we present an abstract approach to study asymptotic
orders for adaptive approximations with respect to a monotone set
function $\J$ defined on dyadic cubes. We determine the exact upper
order in terms of the critical value of the corresponding $\J$-partition
function, and we are able to provide upper and lower bounds in term
of fractal-geometric quantities. With properly chosen $\J$, our new
approach has applications in many different areas of mathematics,
including the spectral theory of Kre\u{\i}n–Feller operators, quantization
dimensions of compactly supported probability measures, and the exact
asymptotic order for Kolmogorov, Gel\textquotesingle fand and linear
widths for Sobolev embeddings into $L_{\mu}^{p}$-spaces.
\end{abstract}

\address{Institute for Dynamical Systems, Faculty 3 – Mathematics and Computer
Science, University of Bremen, Bibliothekstr. 5, 28359 Bremen, Germany}
\keywords{adaptive approximation algorithm, approximation theory, $L^{q}$-spectrum,
partition functions, Minkowski dimension, (coarse) multifractal formalism.}
\subjclass[2000]{primary: 68W25, 41A25 secondary: 28A80, 65D15}
\thanks{This research was supported by the Deutsche Forschungsgemeinschaft
(DFG, German Research Foundation) grant Ke 1440/3-1 and as part of
the Collaborative Research Centre TRR 181 (L03) \emph{'Energy Transfers
in Atmosphere and Ocean}' funded by the DFG - Grant number 274762653.}
\maketitle

\section{Introduction and statement of main results}

The study of adaptive approximation algorithms goes back to the seminal
work of Birman and Solomjak in the 1970s \cite{MR0217487,MR0482138},
which was motivated by the study of asymptotics for spectral problems,
and was subsequently refined by Borzov 1971 \cite{Borzov1971} for
singular measures, and then by DeVore and Yu \cite{MR939183,MR1035930}
for certain boundary cases not treated by Birman, Solomjak or Borzov.
One of the earliest comprehensive treatments of such adaptive approximations
in a geometric context for the study of convex bodies can be found
in the in the textbook \cite{MR353117}. Generally speaking, we deal
with the asymptotics of counting problems derived from set functions
defined on dyadic subcubes of the unit cube. Recently, this problem
has attracted renewed attention in the context of
\begin{itemize}
\item \emph{piecewise polynomial approximation} in \cite{MR1781213,MR4077830},
\item \emph{spectral asymptotics} in \cite{MR4331823,MR4484835,KN2022b,KN2022,KN21},
\item \emph{quantization of probability measures} in \cite{KN22b,KN2024QDnegativeorder},
and
\item \emph{Kolmogorov, Gel\textquotesingle fand, }and\emph{ linear widths}
in \cite{MR4444736,KesseboehmerWiegmann}.
\end{itemize}
Our new approach improves some of the classic results (see e.\,g\@.
\prettyref{subsec:PartitionImprovemetBirmanSolomyak}, where we compare
our results with work of Birman and Solomjak from the 1970s) and is
fundamental for all the results by the authors mentioned above. In
this note we also allow a generalisation with respect to the range
of set functions considered, as this proves to be very useful for
applications to spectral asymptotics (e.\,g\@. in \cite{KN2022b}).
However, many applications involve set functions that are defined
on all dyadic cubes without further restrictions; we will refer to
this case as the \emph{classical case}.

\subsection{The basic setting\label{subsec:Introduction-and-basic-setting}}

This paper is concerned with the study of the asymptotic behaviour
of an adaptive approximation algorithm in the following setting. For
$d\in\N$, we call $Q\coloneqq I_{1}\times\cdots\times I_{d}$ a dyadic
cube of side length $2^{-n}$ if $I_{i}$ are (half-open, open, or
closed) intervals with endpoints in the dyadic grid $\left\{ k2^{-n}:k\in\Z\right\} $.
Inductively, we define a sequence $\mathcal{D}_{n}$ of dyadic partitions
as follows: Let $\Q\subset\R^{d}$ denote a particular choice of a
dyadic unit cube and set $\mathcal{D}_{0}\coloneqq\left\{ \Q\right\} $.
For $\mathcal{D}_{n}$ given, we let $\mathcal{D}_{n+1}$ be a refinement
of $\mathcal{D}_{n}$, this means that each element of $\mathcal{D}_{n}$
can be decomposed into $2^{d}$ disjoint elements of $\mathcal{D}_{n+1}$.
Note that cubes in $\mathcal{D}_{n}$ are not necessarily congruent,
in that we allow certain faces of $Q\in\mathcal{D}_{n}$ not to be
a subset of $Q$. In this way, each $\mathcal{D}_{n}$ defines a dyadic
partition of $\Q=\bigcup\mathcal{D}_{n}$, $n\in\N_{0}$, and the
union of all such partitions $\mathcal{D}\coloneqq\bigcup_{n\in\N_{0}}\mathcal{D}_{n}$
defines a \emph{semiring} of sets, that is $\mathcal{D}\cup\left\{ \emptyset\right\} $
is stable under intersections and for any $A,B\in\mathcal{D}$ with
$A\subset B$ we have that $B\setminus A$ can be written as a finite
disjoint union of sets from $\mathcal{D}$. For some applications
of our formalism, a more general approach is required (see \prettyref{rem:Our-most-prominent}).
For this we will introduce a fixed subset $\DD\subset\D$ and set
its level $n\in\N_{0}$ cubes to be $\DD{}_{n}\coloneqq\DD\cap\D_{n}$.
Throughout the paper, we also fix a set function 
\[
\J:\mathcal{\DD}\to\R_{\geq0},
\]
which is assumed to be
\begin{enumerate}
\item \emph{non-trivial, }i.\,e\@. $\J$ is not identically zero,
\item \emph{monotone}, i.\,e\@. for all $Q,Q'\in\DD$ with $Q\subset Q'$
we have $\J\left(Q\right)\leq\J\left(Q'\right)$,
\item \emph{uniformly vanishing} in the sense that ${\displaystyle j_{n}\coloneqq\sup_{Q\in\bigcup_{k\geq n}\DD_{k}}\J\left(Q\right)\searrow0}$,
for $n\to\infty$,
\item and \emph{locally non-vanishing}, i.\,e\@. for $n\in\N_{0}$ and
each cube $Q$ from $\DD_{n}$ with $\J\left(Q\right)>0$ there exists
at least one subcube $Q'\subset Q$ with $Q'\in\DD_{n+1}$ and $\J\left(Q'\right)>0$.
\end{enumerate}
Throughout, by $\Lambda$ we will denote the $d$-dimensional Lebesgue
measure. For $x>1/j_{0}$, we define the so-called \emph{minimal $x$-good
partition} by 
\begin{equation}
G_{x}\coloneqq\left\{ Q\in\DD:\J\left(Q\right)<1/x\,\&\,\exists Q'\in\DD_{\left|\log_{2}\Lambda\left(Q\right)\right|/d-1}:Q'\supset Q\;\&\;\J(Q')\geq1/x\right\} .\label{eq:Def_G_x}
\end{equation}
Note that, strictly speaking, $G_{x}$ is not a partition unless we
are in the classical case, i.\,e\@. $\DD=\D$. However, $G_{x}$
does partition the '$x$-bad cubes', i.\,e\@. the union of those
$Q\in\DD$ with $\J\left(Q\right)\geq1/x$, where we ignore cubes
from $\D\setminus\DD$. Also note that for all $Q\in\mathcal{D}$
and $n\in\N$ we have that $\left|\log_{2}\Lambda\left(Q\right)\right|/d=n$
if and only if $Q\in\mathcal{D}_{n}$.

The aim of this work is to investigate the growth rate of
\begin{equation}
M\left(x\right)\coloneqq\card\left(G_{x}\right)\label{eq:Def_M(x)}
\end{equation}
 as $x\in\R_{>0}$ tends to infinity, with regard to the leading exponents
\begin{equation}
\overline{h}\coloneqq\overline{h}_{\J}\coloneqq\limsup_{x\to\infty}\frac{\log\left(M\left(x\right)\right)}{\log\left(x\right)}\;\text{and }\:\underline{h}\coloneqq\underline{h}_{\J}\coloneqq\liminf_{x\to\infty}\frac{\log\left(M\left(x\right)\right)}{\log\left(x\right)}.\label{eq:Def_h_}
\end{equation}
We will refer to these quantities as the \emph{upper}, resp\@. \emph{lower},
$\J$\emph{-partition entropy}. In fact, we will determine the upper
$\J$-partition entropy in terms of the $\J$-partition function,
which generalises the concept of the $L^{q}$-spectrum for measures,
see \prettyref{subsec:-Partition-functions}. For the lower $\J$-partition
entropy we provide natural bounds, and for particularly regular cases
we can also determine its value. In any case, upper and lower bounds
are provided in terms of specific fractal quantities.
\begin{rem}
\label{rem:Our-most-prominent}Our most prominent example of such
a restriction to $\DD$ of the cubes of $\D$ can be found in \cite{KN2022b}.
In this example, we consider the set $\DD\coloneqq\left\{ Q\in\mathcal{D}:\partial\Q\cap\overline{Q}=\varnothing\right\} $
ignoring all cubes touching the boundary of $\Q$ in order to handle
the Dirichlet case for the spectral asymptotics of Kre\u{\i}n–Feller
operators in higher dimensions.
\end{rem}

\subsection{The dual problem\label{subsec:The-dual-problem}}

For applications (like for quantization of probability measures) the
dual problem is also sometimes useful. For convenience we write 
\[
\J\left(P\right)\coloneqq\max_{Q\in P}\J\left(Q\right)
\]
 for any collection of cubes $P\subset\DD$. With this at hand, we
define 
\begin{equation}
\gamma_{n}\coloneqq\gamma_{\J,n}\coloneqq\min_{P\in\Pi_{n}}\J\left(P\right),\:\text{where \ensuremath{\Pi_{n}\coloneqq\left\{  P=G_{x}:\text{for some \ensuremath{x>0} and }\card\left(P\right)\leq n\right\} } .}\label{eq:Def_gamma}
\end{equation}
We will investigate the following upper, resp. lower, exponent of
convergence of $\gamma_{n}$ given by
\begin{equation}
\overline{\alpha}\coloneqq\overline{\alpha}_{\J}\coloneqq\limsup_{n\rightarrow\infty}\frac{\log\left(\gamma_{n}\right)}{\log\left(n\right)},\qquad\text{resp.}\qquad\underline{\alpha}\coloneqq\underline{\alpha}_{\J}\coloneqq\liminf_{n\rightarrow\infty}\frac{\log\left(\gamma_{n}\right)}{\log\left(n\right)}.\label{eq:Def_alpha_}
\end{equation}

\subsection{The classical case ($\DD=\D)$ and the adaptive approximation algorithm}

This section is devoted to study the classical case $\DD=\D$ which
leads to the classical adaptive approximation algorithm studied intensively
in the past decades (see \cite{MR1035930,MR1781213,MR939183,Borzov1971,MR0209733}).
For this we show how $G_{x}$ can be constructed via a finite induction
(see also \prettyref{fig:PartitionAlgo} for an illustration) by subdividing
`$x$-\emph{bad cubes}' into $2^{d}$ subcubes and picking in each
inductive step the `$x$-\emph{good cubes}'.

\subsection*{Adaptive~Approximation~Algorithm.}

\emph{For $x>1/\J\left(\Q\right)$ we initialise our induction by
setting $\mathcal{B}_{0}\coloneqq\left\{ \Q\right\} $ and $\mathcal{G}_{0}=\emptyset$.
Now, suppose the set of '$x$-bad cubes' $\mathcal{B}_{n}\subset\D_{n}$
and '$x$-good cubes' $\mathcal{G}_{n}\subset\D_{0}\cup\cdots\cup\D_{n}$
of generation $n\in\N_{0}$ are given. Then we set 
\begin{align*}
\mathcal{B}_{n+1} & \coloneqq\left\{ Q\in\D_{n+1}:\exists Q'\in\mathcal{B}_{n}:Q\subset Q',\J\left(Q\right)\geq1/x\right\} \:\text{and }\\
\mathcal{G}_{n+1} & \coloneqq\left\{ Q\in\D_{n+1}:\exists Q'\in\mathcal{B}_{n}:Q\subset Q',\J\left(Q\right)<1/x\right\} \cup\mathcal{G}_{n}.
\end{align*}
 Since $\J$ is uniformly vanishing, this procedure terminates after
say $m_{x}\in\N$ steps with $\mathcal{B}_{m_{x}+1}=\emptyset$ and
we return $\mathcal{G}_{m_{x}+1}$.}

\begin{figure}
\includegraphics[width=0.5\textwidth]{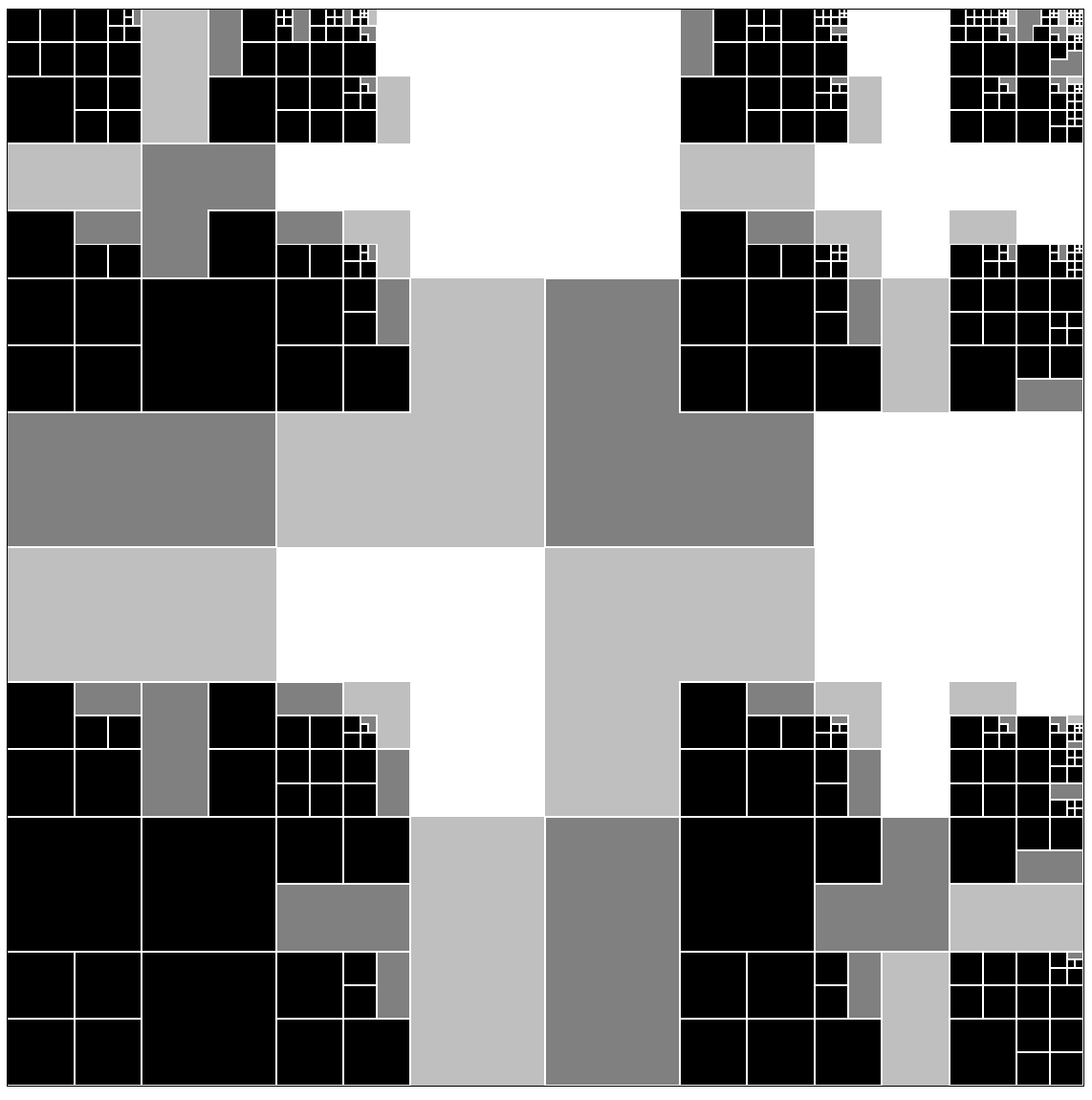}

\caption{\label{fig:PartitionAlgo}Illustration of the adaptive approximation
algorithm for $\J(Q)\protect\coloneqq\left(\nu\varotimes\nu\right)(Q)\left(\Lambda(Q)\right)^{2}$,
$Q\in\mathcal{D}^{*}\protect\coloneqq\left\{ Q\in\mathcal{D}:\J\left(Q\right)>0\right\} $,
$d=2$, where $\nu$ denotes the $\left(0.1,0.9\right)$-Cantor measure
supported on the triadic Cantor set, i.\,e\@. the self similar measure
generated by the IFS $S_{1}:x\protect\mapsto x/3$, $S_{2}:x\protect\mapsto x/3+2/3$
and probability weights $p_{1}=0.1$ and $p_{2}=0.9$ (see \cite{MR625600}).
Here, the light grey cubes belong to $G_{10^{-3}}$, the grey cubes
belong to $G_{10^{-4}}$ and the black cubes belong to $G_{10^{-7}}$.
Of course, the darker cubes overlay the lighter ones.}
\end{figure}

~

The following lemma shows that for $\DD=\D$ the above algorithm indeed
recovers the $x$-good partition $G_{x}$ and that this set solves
an optimisation problem. The proofs for the following lemmas are postponed
to the last section, which is also devoted to the proofs of the main
results.
\begin{lem}
\label{lem:GxOptimal}For $\J:\D\to\R_{\geq0}$, $x>1/\J(\Q)$ and
with the notation given in the Adaptive Approximation Algorithm we
have 
\[
G_{x}=\mathcal{G}_{m_{x}+1}
\]
and this set solves the following optimisation problem: For $\tilde{P}$
from the set $\Pi$ of partitions of $\Q$ with elements from $\D$,
we have 
\[
\card\left(\tilde{P}\right)=\inf\left\{ \card\left(P\right):P\in\Pi,\J\left(P\right)<1/x\right\} \iff\tilde{P}=G_{x}.
\]
\end{lem}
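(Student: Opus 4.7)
The plan is to split the claim into three pieces: identifying what the algorithm produces, verifying it is a partition of $\Q$, and establishing optimality.

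For the identification $G_x = \mathcal{G}_{m_x+1}$, I would inductively characterise $\mathcal{B}_n$ and $\mathcal{G}_n$ in closed form. Monotonicity of $\J$ implies that every ancestor of an $x$-bad cube is itself $x$-bad, so the defining recursion for $\mathcal{B}_{n+1}$ collapses to the unconditional description $\mathcal{B}_n = \{Q \in \D_n : \J(Q) \geq 1/x\}$. Consequently $\mathcal{G}_{n+1} \setminus \mathcal{G}_n$ consists precisely of those $Q \in \D_{n+1}$ with $\J(Q) < 1/x$ whose parent satisfies $\J \geq 1/x$, which is exactly $G_x \cap \D_{n+1}$ in view of the definition of $G_x$. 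Termination follows from uniform vanishing, since once $j_N < 1/x$ no level-$N$ cube is bad; taking the union over $n \leq m_x$ gives $\mathcal{G}_{m_x+1} = G_x$.

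For the partition property, a straightforward induction on $n$ shows that $\mathcal{B}_n \cup \mathcal{G}_n$ partitions $\Q$: the update step splits each $Q' \in \mathcal{B}_n$ into its $2^d$ dyadic children, distributing them into $\mathcal{B}_{n+1}$ or $\mathcal{G}_{n+1}$ according to the value of $\J$, which preserves the tiling. At the terminal stage $\mathcal{B}_{m_x+1} = \emptyset$, so $G_x$ tiles $\Q$.

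The main technical point, which I expect to be the only nontrivial step, is the optimality statement. Fix $\tilde{P} \in \Pi$ with $\J(\tilde{P}) < 1/x$ and any $Q \in G_x$. The key observation is that every $Q'' \in \tilde{P}$ meeting $Q$ must satisfy $Q'' \subseteq Q$: two dyadic cubes with a common point are nested, and if instead $Q \subsetneq Q''$ then $Q''$ contains the parent $Q^{*}$ of $Q$, so monotonicity forces $\J(Q'') \geq \J(Q^{*}) \geq 1/x$, contradicting $\J(\tilde{P}) < 1/x$. Since $\tilde{P}$ partitions $\Q$, the subfamily of $\tilde{P}$ lying in $Q$ partitions $Q$, hence has cardinality at least one, with equality iff $Q \in \tilde{P}$. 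Summing over the disjoint tiles of $G_x$ gives $\card(\tilde{P}) = \sum_{Q \in G_x} \card\{Q'' \in \tilde{P} : Q'' \subseteq Q\} \geq \card(G_x)$, with equality forcing $G_x \subseteq \tilde{P}$; since both are partitions of $\Q$, this yields $\tilde{P} = G_x$.
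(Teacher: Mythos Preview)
Your proof is correct. The identification $G_x=\mathcal{G}_{m_x+1}$ and the partition property are handled essentially as in the paper (which dismisses the former as ``follows from the definitions'' and only notes in passing that $G_x$ partitions $\Q$), though you spell out the inductive characterisation $\mathcal{B}_n=\{Q\in\D_n:\J(Q)\geq 1/x\}$ more carefully than the paper does.

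For the optimality claim your route differs from the paper's. The paper argues by contradiction on a minimiser $P_{\mathrm{opt}}$: if some $Q\in P_{\mathrm{opt}}$ had a parent $Q'$ with $\J(Q')<1/x$, one could replace all cubes of $P_{\mathrm{opt}}$ lying inside $Q'$ by $Q'$ itself, obtaining a strictly smaller admissible partition and contradicting minimality; hence every cube of $P_{\mathrm{opt}}$ has an $x$-bad parent, so $P_{\mathrm{opt}}\subseteq G_x$ and therefore $P_{\mathrm{opt}}=G_x$. You instead show directly that \emph{every} admissible $\tilde P$ refines $G_x$ (each $Q\in G_x$ is tiled by members of $\tilde P$), which gives the inequality $\card(\tilde P)\geq\card(G_x)$ and the uniqueness of the minimiser in one stroke, without first invoking the existence of a minimiser. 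The two arguments are dual---the paper coarsens a candidate partition until it reaches $G_x$, you show $G_x$ is already the coarsest admissible partition---but yours is a touch more economical and makes the structural fact (admissible partitions refine $G_x$) explicit.
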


Similarly, also the dual problem is well known in the literature and
closely connected also to the study of quantization dimensions (\cite{KN22b,KN2024QDnegativeorder}).
\begin{lem}
\label{lem:DualClassics}For $\DD=\D$ and $\tilde{\Pi}_{n}$ denoting
the set of partitions of $\Q$ with elements from $\D$ and cardinality
not exceeding $n\in\N$, we have
\[
\gamma_{n}=\inf_{P\in\tilde{\Pi}_{n}}\J\left(P\right).
\]
\end{lem}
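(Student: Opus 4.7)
My plan is to prove the identity by establishing the two inequalities. For the direction $\gamma_{n}\geq\inf_{P\in\tilde{\Pi}_{n}}\J(P)$, I would invoke \prettyref{lem:GxOptimal} to see that in the classical case $\DD=\D$ every $G_{x}$ (with $x>1/j_{0}$) coincides with the terminal set $\mathcal{G}_{m_{x}+1}$ of the Adaptive Approximation Algorithm and is therefore a dyadic partition of $\Q$ by construction. Consequently $\Pi_{n}\subset\tilde{\Pi}_{n}$, and the infimum over the larger family can only decrease.

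For the reverse direction I would fix an arbitrary $P\in\tilde{\Pi}_{n}$ and set $c\coloneqq\J(P)$, then produce $G_{x}$-partitions lying in $\Pi_{n}$ whose $\J$-value tends to $c$ from above. The crucial step is the cardinality control: for every $x\in(1/j_{0},1/c)$ one has $\J(P)=c<1/x$, so the optimality assertion of \prettyref{lem:GxOptimal} yields $\card(G_{x})\leq\card(P)\leq n$, placing $G_{x}$ in $\Pi_{n}$ and giving $\J(G_{x})<1/x$. Letting $x\uparrow 1/c$ gives $\gamma_{n}\leq c=\J(P)$, and taking the infimum over $P\in\tilde{\Pi}_{n}$ finishes the argument. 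The degenerate case $c=0$ is handled analogously by letting $x\to\infty$: then $\J(P)=0<1/x$ holds for every $x$, so again $G_{x}\in\Pi_{n}$ and $\J(G_{x})<1/x\to 0$ by the uniform vanishing of $\J$.

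The only real obstacle, namely transferring a cardinality bound from an arbitrary dyadic partition $P$ to $G_{x}$, has already been settled by \prettyref{lem:GxOptimal}, so no new combinatorial work is required here. A minor point of care is the boundary case $c=\J(\Q)$, where the interval $(1/j_{0},1/c)$ collapses; this only arises for $P=\{\Q\}$ and reduces to a direct consistency check against the definition of $\gamma_{n}$.
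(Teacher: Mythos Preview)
Your proof is correct and follows essentially the same route as the paper: the inequality $\gamma_n \geq \inf_{P\in\tilde{\Pi}_n}\J(P)$ comes from the inclusion $\Pi_n \subset \tilde{\Pi}_n$, and the reverse inequality is obtained by invoking the cardinality optimality of $G_x$ from \prettyref{lem:GxOptimal} and letting the threshold approach the target value. Your handling of the edge cases $c=0$ and $c=\J(\Q)$ is more explicit than the paper's terse version, but the underlying argument is identical.
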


With this connection it will turn out that our results (see \prettyref{thm:ImproveBirmanSolomyak})
improve some classical work in this respect, e.\,g\@. \cite[Theorem 2.1]{MR0217487}.

\subsection{\emph{$\J$-}partition functions\label{subsec:-Partition-functions}}

Next, let us turn to the concept of partition functions, which in
a certain extent is borrowed from the thermodynamic formalism. Our
most powerful auxiliary object is the $\J$-\emph{partition function},\emph{
}for $q\in\R_{\geq0}$, given by
\begin{align}
\GL\left(q\right) & \coloneqq\GL_{\J}(q)\coloneqq\limsup_{n\rightarrow\infty}\GL_{n}\left(q\right)\:\text{with\, }\GL_{n}\left(q\right)\coloneqq\GL_{\J,n}\left(q\right)\coloneqq\frac{1}{\log\left(2^{n}\right)}\log\sum_{Q\in\DD_{n}}\J\left(Q\right)^{q}.\label{eq:DefGL}
\end{align}
\begin{figure}
\center{\begin{tikzpicture}[scale=0.9, every node/.style={transform shape},line cap=round,line join=round,>=triangle 45,x=1cm,y=1cm] \begin{axis}[ x=2.7cm,y=2.7cm, axis lines=middle, axis line style={very thick},ymajorgrids=false, xmajorgrids=false, grid style={thick,densely dotted,black!20}, xlabel= {$q$}, ylabel= {$\GL (q)$}, xmin=-0.49 , xmax=3.5 , ymin=-0.3, ymax=2.5,x tick style={color=black}, xtick={0,1,2,2.48,3},xticklabels = {0,1,2,$\q$,3},  ytick={0,1,2},yticklabels = {0,$\GL(1)=1$,$\GL(0)=2$}] \clip(-0.5,-0.3) rectangle (4,4); 
\draw[line width=1pt,smooth,samples=180,domain=-0.3:3.4] plot(\x,{log10(0.08^((\x))+0.2^((\x))+0.36^((\x))+0.36^((\x)))/log10(2)+\x}); 
\draw [line width=01pt,dotted, domain=-0.05 :4.4] plot(\x,{(((log10(0.36))/(log10(2))+1)*(\x-1)+1)});
\draw [line width=01pt,dashed, domain=-0.05 :4.4] plot(\x,{(1-\x)+1)});
 
\node[circle,draw] (c) at (2.48 ,0 ){\,};

\draw [line width=.7pt,dotted, gray] (1 ,0.)--(1,1); 
\draw [line width=.7pt,dotted, gray] (0  ,1.0 )-- (1,1);
\end{axis} 
\end{tikzpicture}}

\caption{\label{fig:Moment-generating-function}A typical partition function
$\GL$ with $\GL\left(0\right)=2$, $\GL\left(1\right)=1$ and $\dim_{\infty}\left(\J\right)>0$.
Natural bounds for $\overline{h}=\q>1$ in this setting are the zeros
of the dashed line $q\protect\mapsto-q\left(\GL\left(0\right)-\GL\left(1\right)\right)+\GL\left(0\right)$
and the dotted line $q\protect\mapsto\left(1-q\right)\dim_{\infty}\left(\J\right)+\GL\left(1\right)$
as given in \prettyref{prop:Geometric_Bounsd}.}
\end{figure}
 Note that we use the convention $0^{0}=0$, that is for $q=0$ we
neglect the summands with $\J\left(Q\right)=0$ in the definition
of $\GL_{n}$. The function $\GL$ is convex as a limit superior of
convex functions. Further, for $\J:\DD\to\R_{\geq0}$ we let $\J^{*}$
denote the restriction of $\J$ to $\DD^{*}\coloneqq\left\{ Q\in\DD:\J\left(Q\right)>0\right\} $
and we observe
\[
\GL_{\J}=\GL_{\J^{*}}.
\]
 We call
\begin{equation}
\dim_{\infty}\left(\J\right)\coloneqq\liminf_{n\to\infty}\frac{\log\left(\J\left(\DD_{n}\right)\right)}{-\log\left(2^{n}\right)}\label{eq:Def_Dim_infty}
\end{equation}
the \emph{$\infty$-dimension of} $\J$, which we often assume to
be strictly positive and in turn leads to $\J$ being uniformly vanishing
(see \prettyref{lem:Unifrom_Decreasing}).

To exclude trivial cases, we will always assume that there exist $a>0$
and $b\in\R$ such that 
\begin{equation}
\GL_{n}\left(a\right)\geq b\label{eq:uniform_proper_convex}
\end{equation}
for all $n\in\N$ large enough; in particular $\GL$ is a proper convex
function. All relevant examples mentioned above fulfil this condition.

Since the maximal asymptotic direction $\lim_{q\to\infty}\GL\left(q\right)/q$
of $\GL$ coincides with $-\dim_{\infty}\left(\J\right)$, $\dim_{\infty}\left(\J\right)>0$
implies that the \emph{critical exponent} 
\[
\kappa\coloneqq\kappa_{\J}\coloneqq\inf\left\{ q\geq0:\sum_{Q\in\DD}\J\left(Q\right)^{q}<\infty\right\} \;\text{coincides with }\;\q\coloneqq\q_{\J}\coloneqq\inf\left\{ q\geq0:\GL\left(q\right)<0\right\} .
\]
If $0<\q<\infty$, then $\q$ is the unique zero of $\GL$ and $\q=\kappa$;
in general, we have $\q\leq\kappa$ (cf\@. \prettyref{lem:Dim00Inequality}).
Note that also $\q=\limsup_{n\to\infty}q_{n}$, where $q_{n}$ denotes
the unique zero of $\GL_{n}$. Let us also write 
\begin{equation}
\underline{\q}\coloneqq\liminf_{n\to\infty}q_{n}.\label{eq:Def_Of_underline_q}
\end{equation}
This quantity will be relevant for upper and lower bounds on the lower
optimised coarse multifractal dimension introduced in the next section
(see \prettyref{prop:LowerBoundLiminfFJ}).

Since $\GL$ does not change when we replace $\DD$ by $\DD^{*}$,
we conclude that $\q_{\J}=\q_{\J^{*}}$.

\subsubsection{Coarse multifractal dimensions}

For the lower bounds, we use a concept closely connected to the coarse
multifractal analysis (see e.\,g\@. \cite{MR3236784}) . For all
$n\in\N$ and $\alpha>0$, we define 
\begin{equation}
N_{\alpha}\left(n\right)\coloneqq\card\mathcal{N}_{\alpha}\left(n\right),\quad\mathcal{N}_{\alpha}\left(n\right)\coloneqq\left\{ Q\in\DD_{n}:\J\left(Q\right)\geq2^{-\alpha n}\right\} ,\label{eq:Def:N_=00005Calpha(n)}
\end{equation}
(for an illustration of $\mathcal{N}_{\alpha}\left(n\right)$ for
an concrete example with optimal $\alpha$, we refer to \prettyref{fig:PartitionAlgoAndMultiFractal})
and set 
\begin{figure}
\includegraphics[width=0.5\textwidth]{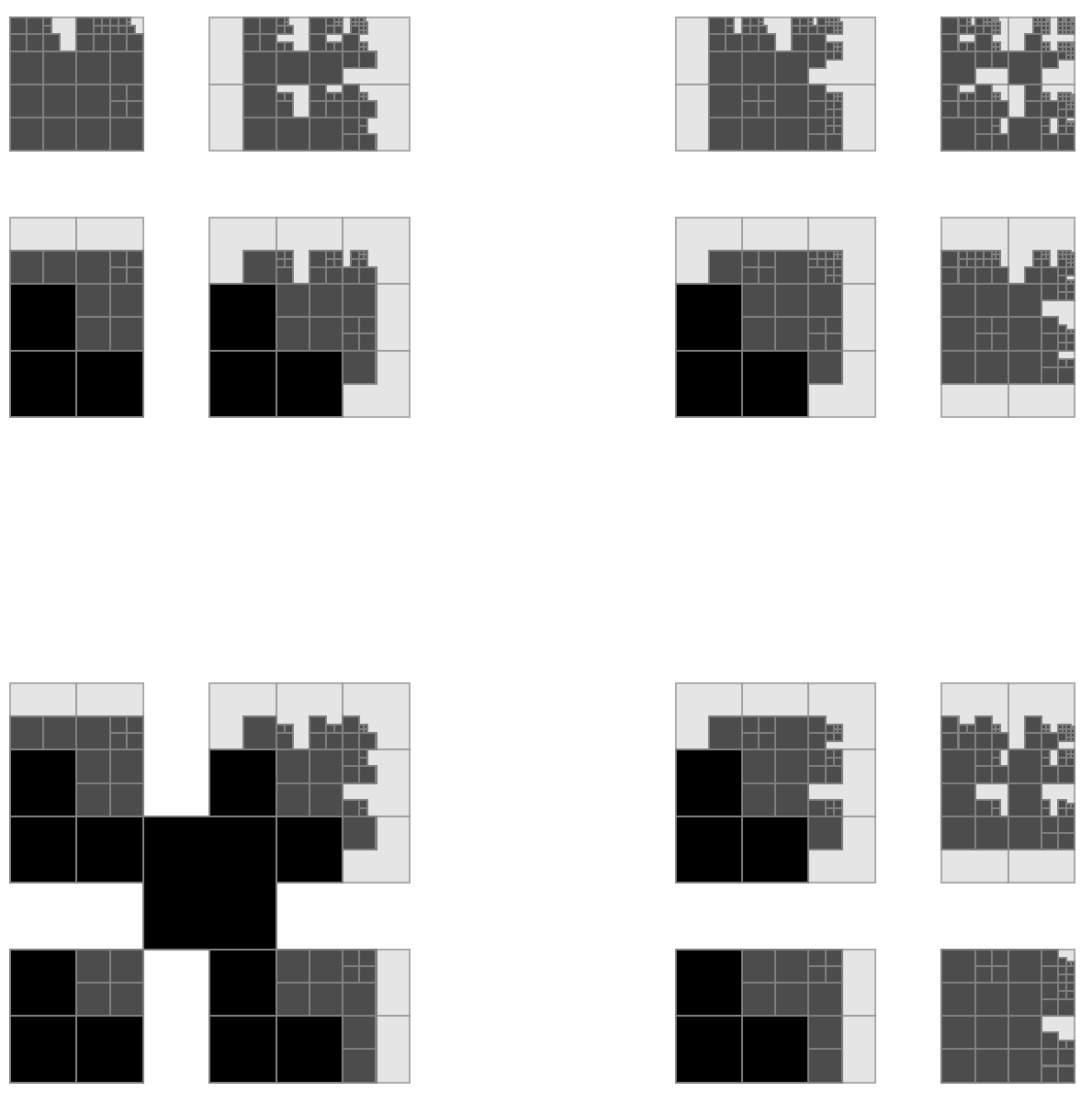}

\caption{\label{fig:PartitionAlgoAndMultiFractal} Illustration for the same
example as in \prettyref{fig:PartitionAlgo} of the cubes $\mathcal{N}_{\alpha}\left(n\right)$
(light grey) and $G_{2^{-\alpha n}}$ (black, or dark grey if covered
by an element of $\mathcal{N}_{\alpha}\left(n\right)$) with $n=4$,
$\alpha=5.734$.}
\end{figure}
\begin{equation}
\overline{F}\left(\alpha\right)\coloneqq\limsup_{n\to\infty}\frac{\log^{+}\left(N_{\alpha}\left(n\right)\right)}{\log\left(2^{n}\right)}\;\text{and }\;\underline{F}\left(\alpha\right)\coloneqq\liminf_{n\to\infty}\frac{\log^{+}\left(N_{\alpha}\left(n\right)\right)}{\log\left(2^{n}\right)},\label{eq:Def_F_(alpha)}
\end{equation}
with $\log^{+}(x)\coloneqq\max\left\{ 0,\log(x)\right\} $, $x\geq0$.
We refer to the quantities
\begin{equation}
\overline{F}\coloneqq\overline{F}_{\J}\coloneqq\sup_{\alpha>0}\frac{\overline{F}\left(\alpha\right)}{\alpha}\quad\text{and }\quad\underline{F}\coloneqq\underline{F}_{\J}\coloneqq\sup_{\alpha>0}\frac{\underline{F}\left(\alpha\right)}{\alpha}\label{eq:Def_F_}
\end{equation}
as the \emph{upper}, resp\@. \emph{lower, optimised coarse multifractal
dimension} with respect to $\J$.

At this point we would like to point out that the reciprocal quantities
closely related to the concept of $n$-widths have already been considered
in the work of Birman and Solomjak \cite{MR0217487,MR562305}; in
\prettyref{subsec:PartitionImprovemetBirmanSolomyak} we show that
our formalism gives improved estimates on the asymptotic rates obtained
by Birman and Solomjak.

\subsection{Main results\label{subsec:Main-results}}

Our main result are stated for $\J:\DD\to\R_{\geq0}$ as given above;
all proofs for this section are postponed to \prettyref{sec:Partition-functions}.
\begin{thm}
\label{thm:MainResult} If $\dim_{\infty}\left(\J\right)>0$, then
\[
\underline{F}\leq\underline{h}\leq\overline{h}=\q=\kappa=\overline{F}.
\]
\end{thm}

\begin{rem}
From the definition it is clear that for $\mathcal{T}\subset\DD$,
all quantities above are monotone in the sense that $\underline{F}$,
$\underline{h}$, etc., which are defined with respect to $\J|_{\mathcal{T}}$
do not exceed $\underline{F}$, $\underline{h}$, etc., which are
defined with respect to $\J$. Further, for the restriction on $\DD^{*}$,
we have $\overline{h}_{\J}=\overline{h}_{\J^{*}}$, which can be seen
in two ways: either use $\GL_{\J}=\GL_{\J^{*}}$ or, alternatively,
$\overline{F}_{\J}=\overline{F}_{\J^{*}}$ and \prettyref{thm:MainResult}.
Also note that $\underline{F}_{\J}=\underline{F}_{\J^{*}}$.

In our proofs we will see that if $\J$ is uniformly vanishing and
allowing $\dim_{\infty}\left(\J\right)=0$, we still have 
\[
\overline{h}\leq\kappa\leq\q.
\]
\end{rem}

\begin{cor}
\label{cor:special_nu^s}Let $\nu$ be a finite Borel measure on $\Q$,
we consider $\J_{s}:\D\to\R_{\geq0},Q\mapsto\left(\nu\left(Q\right)\right)^{s}$
for some $s>0$ and such that $\dim_{\infty}\left(\J_{1}\right)>0$.
Then we have 
\[
\underline{h}_{\J_{s}}=\overline{h}_{\J_{s}}=\q_{\J_{s}}=1/s.
\]
\end{cor}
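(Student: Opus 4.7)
The plan is to squeeze all three quantities $\underline{h}_{\J_s}$, $\overline{h}_{\J_s}$ and $\mathfrak{q}_{\J_s}$ between $1/s$ from above and below. The upper bound $\overline{h}_{\J_s}=\mathfrak{q}_{\J_s}$ will be supplied directly by Theorem~\ref{thm:MainResult}, which applies because $\dim_\infty(\J_s)=s\dim_\infty(\J_1)>0$. The remaining work splits into (i) a direct counting estimate that produces the explicit inequality $\mathcal{M}_{\J_s}(x)>x^{1/s}\nu(\Q)$, and (ii) an explicit computation giving $\mathfrak{q}_{\J_s}=1/s$.

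For (i), I exploit that we are in the classical case $\DD=\D$: by Lemma~\ref{lem:GxOptimal} together with the Adaptive Approximation Algorithm the set $G_x$ is an honest dyadic partition of $\Q$. Each $Q\in G_x$ satisfies $\nu(Q)^s=\J_s(Q)<1/x$, i.e.\ $\nu(Q)<x^{-1/s}$, so summing over this partition gives
\[
\nu(\Q)\;=\;\sum_{Q\in G_x}\nu(Q)\;<\;\mathcal{M}_{\J_s}(x)\cdot x^{-1/s},
\]
which is precisely the asserted inequality $x^{1/s}\nu(\Q)<\mathcal{M}_{\J_s}(x)$. Taking logarithms, dividing by $\log x$ and letting $x\to\infty$ yields $\underline{h}_{\J_s}\ge 1/s$.

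For (ii), I write $\GL_{\J_s}(q)=\beta_\nu(sq)$, where $\beta_\nu(p)\coloneqq\limsup_{n}(n\log 2)^{-1}\log\sum_{Q\in\D_n}\nu(Q)^p$ is the familiar $L^q$-spectrum of $\nu$. The hypothesis $\dim_\infty(\J_1)>0$ implies that $\sup_{Q\in\D_n}\nu(Q)\to 0$, so for all $n$ large enough every $\nu(Q)\le 1$; combined with $\sum_{Q\in\D_n}\nu(Q)=\nu(\Q)>0$ and the pointwise bound $\nu(Q)^{sq}\ge\nu(Q)$ valid for $sq\le 1$, this forces $\GL_{\J_s}(q)\ge 0$ on $[0,1/s]$, hence $\mathfrak{q}_{\J_s}\ge 1/s$. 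Conversely, $\dim_\infty(\J_s)=s\dim_\infty(\J_1)>0$ makes the asymptotic slope $\GL_{\J_s}(q)/q\to -s\dim_\infty(\J_1)<0$, producing some $q_0>1/s$ with $\GL_{\J_s}(q_0)<0$; convexity of $\GL_{\J_s}$ together with $\GL_{\J_s}(1/s)\le 0$ then propagates strict negativity to the entire interval $(1/s,q_0)$, giving $\mathfrak{q}_{\J_s}\le 1/s$.

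Assembling the three pieces yields $\tfrac{1}{s}\le\underline{h}_{\J_s}\le\overline{h}_{\J_s}=\mathfrak{q}_{\J_s}=\tfrac{1}{s}$. The main obstacle I anticipate is the soft convex-analysis step in (ii): pinning down $\mathfrak{q}_{\J_s}=1/s$ rather than merely $\mathfrak{q}_{\J_s}\le 1/s$ requires both the equality $\GL_{\J_s}(1/s)=0$ (which demands the uniform bound $\nu(Q)\le 1$, arranged via $\dim_\infty(\J_1)>0$) and a strictly negative value of $\GL_{\J_s}$ somewhere beyond $1/s$; only then does convexity finish the job.
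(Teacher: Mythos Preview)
Your proof is correct and follows essentially the same line as the paper: both use that $G_x$ is a genuine dyadic partition of $\Q$ (classical case), sum $\nu$ over it to obtain $x^{1/s}\nu(\Q)<\card(G_x)$ and hence $\underline{h}_{\J_s}\ge 1/s$, and then invoke Theorem~\ref{thm:MainResult} for $\overline{h}_{\J_s}=\mathfrak{q}_{\J_s}$. The only difference is that the paper simply asserts $\mathfrak{q}_{\J_s}=1/s$ under $\dim_\infty(\nu)>0$, whereas you spell out the convex-analysis verification via $\GL_{\J_s}(1/s)=\beta_\nu(1)=0$ and the negative asymptotic slope; your added detail is sound (and in fact $\GL_{\J_s}(1/s)=0$ holds unconditionally since $\sum_{Q\in\D_n}\nu(Q)=\nu(\Q)$, so the uniform bound $\nu(Q)\le 1$ is only needed for the inequality $\GL_{\J_s}(q)\ge 0$ on $[0,1/s]$, not for the value at $1/s$ itself).
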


\begin{proof}
We readily see that for $x>1/\J(\Q)$ and for $Q\in G_{x}$ we have
$\nu(Q)x^{1/s}<1$. Therefore, 
\[
x^{1/s}\nu(\Q)=x^{1/s}\sum_{Q\in G_{x}}\J(Q)^{1/s}<\sum_{Q\in G_{x}}1=\card\left(G_{x}\right),
\]
 proving $1/s\leq\underline{h}_{\J_{s}}$. Also, $\q_{\J_{s}}=1/s$
is immediate. Hence, the equalities follow from \prettyref{thm:MainResult}.
\end{proof}
\begin{rem}
In \cite[Section 3.2]{MR4484835}, the set function $\nu^{s}$ is
crucial to estimate the eigenvalues of Birman–Schwinger operators.This
work follows a different approach; instead of dyadic cubes, aligned
cubes contained in $\Q$ have been considered. This improves the upper
estimate, in the sense that there exists a constant $c>0$ such that
for $x$ large, $M_{\J_{s}}\left(x\right)\leq cx^{1/s}$.
\end{rem}

\begin{thm}
\label{thm:ImproveBirmanSolomyak} Assuming $\dim_{\infty}\left(\J\right)>0$,
we have
\[
\frac{-1}{\underline{F}}\leq\frac{-1}{\underline{h}}=\underline{\alpha}\leq\overline{\alpha}=\frac{-1}{\overline{h}}=\frac{-1}{\q}.
\]
\end{thm}

In \prettyref{subsec:PartitionImprovemetBirmanSolomyak}, we give
the proof and further discussions in the context of the classical
work \cite{MR0217487,Borzov1971}.

\subsubsection{Fractal-Geometric bounds}

We define the \emph{support of} $\J$ to be 
\begin{align*}
\supp\left(\J\right) & \coloneqq\bigcap_{k\in\N}\overline{\bigcup_{n\geq k}\left\{ Q:Q\in\DD_{n},\J\left(Q\right)>0\right\} }.
\end{align*}
Note that if $\J$ is given by a measure $\nu$ restricted to the
dyadic cubes $\D$, then our definition of the support coincides with
the usual definition of the support of measures. We write 
\[
\overline{\dim}_{M}\left(A\right)\coloneqq\limsup_{n\rightarrow\infty}\frac{\log\left(\card\left(\left\{ Q\in\D_{n}:Q\cap A\neq\emptyset\right\} \right)\right)}{\log\left(2^{n}\right)}\in\left[0,d\right]
\]
for the\emph{ upper Minkowski dimension} of the set $A\subset\Q$.
Slightly abusing notation, we also write 
\begin{align*}
\overline{\dim}_{M}\left(\J\right) & \coloneqq\overline{\dim}_{M}\left(\supp\left(\J\right)\right).
\end{align*}

In several applications of our results, the value of $\GL\left(1\right)$
is easily accessible (see e.\,g\@. \cite{KN2022b}), the following
expressions provide convenient bounds. For an illustrating example
see \prettyref{fig:Moment-generating-function}.
\begin{prop}
\label{prop:Geometric_Bounsd}If $1\leq\q<\infty$, then 
\[
\frac{\GL\left(0\right)}{\GL\left(0\right)-\GL\left(1\right)}\leq\q\leq\frac{\dim_{\infty}\left(\J\right)+\GL\left(1\right)}{\dim_{\infty}\left(\J\right)}\leq\frac{\tau(0)}{\dim_{\infty}\left(\J\right)}\leq\frac{\overline{\dim}_{M}\left(\J\right)}{\dim_{\infty}\left(\J\right)}\leq\frac{d}{\dim_{\infty}\left(\J\right)},
\]
and if $\q\leq1$, then 
\[
\frac{\dim_{\infty}\left(\J\right)+\GL\left(1\right)}{\dim_{\infty}\left(\J\right)}\leq{\displaystyle \q\leq\frac{\GL(0)}{\GL(0)-\GL\left(1\right)}\leq\frac{\overline{\dim}_{M}\left(\J\right)}{\overline{\dim}_{M}\left(\J\right)-\GL\left(1\right)}.}
\]
\end{prop}

\subsubsection{Regularity results}
\begin{defn}
Assuming $\dim_{\infty}\left(\J\right)>0$, we define two notions
of regularity.
\begin{enumerate}
\item We call $\J$\emph{ multifractal-regular (MF-regular)} if $\underline{F}=\overline{F}$.
\item We call $\J$ \emph{partition function regular (PF-regular)} if
\begin{itemize}
\item $\GL\left(q\right)=\liminf_{n\to\infty}\GL_{n}\left(q\right)$ for
$q\in\left(\q-\varepsilon,\q\right)$, for some $\varepsilon>0$,
or
\item $\GL\left(\q\right)=\liminf_{n\to\infty}\GL_{n}\left(\q\right)$ and
$\GL$ is differentiable at $\q$.
\end{itemize}
\end{enumerate}
\end{defn}

\begin{rem}
The above theorem and the notion of regularity give rise to the following
list of observations:
\end{rem}

\begin{enumerate}
\item An easy calculation shows that 
\[
\underline{F}\leq\inf\left\{ q>0:\liminf_{n\to\infty}\GL_{n}\left(q\right)<0\right\} =\underline{\q}\leq\q=\overline{F}.
\]
From this it follows that MF-regularity implies that $\GL$ exists
as a limit in $\q$.
\item If $\J$ is MF-regular, then equality holds everywhere in the chain
of inequalities \prettyref{thm:MainResult}.
\end{enumerate}
The following theorem shows that the $\J$-partition function is in
many situations a valuable auxiliary concept to determine the exact
value of the $\J$-partition entropy.
\begin{thm}
\label{thm:LqRegularImpliesRegular}Assume $\dim_{\infty}(\J)>0$.
If $\J$ is PF-regular, then it is MF-regular. If $\J$ is MF-regular,
then the $\J$-partition entropy $h$ exists with $h=\q=\ensuremath{\kappa}=F$.
\end{thm}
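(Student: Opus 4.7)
In view of \prettyref{thm:MainResult} and the already known bound $\underline F\le\overline F=\mathfrak q=\kappa$, the theorem reduces entirely to showing that PF-regularity implies the reverse inequality $\underline F\ge\mathfrak q$: once MF-regularity is in hand, the chain in \prettyref{eq:MainChain_limsup} collapses, so $h$ exists and equals $\mathfrak q=\kappa=F$. The only task is therefore to produce, at a suitably chosen scale $\alpha$, enough cubes in $B_\alpha(n)$ for \emph{every} large $n$.

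My workhorse is a Chebyshev-type concentration estimate for the partition function. Factoring $\J(Q)^{q}=\J(Q)^{q-q'}\J(Q)^{q'}$ and bounding the first factor according to whether $\J(Q)\ge 2^{-\beta n}$ or $\J(Q)<2^{-\beta n}$ yields, for any $\beta>0$,
\begin{align*}
\sum_{Q\in\DD_n,\,\J(Q)\ge 2^{-\beta n}}\J(Q)^{q} &\le 2^{\beta(q'-q)n+n\GL_n(q')}\quad(q'>q),\\
\sum_{Q\in\DD_n,\,\J(Q)<2^{-\beta n}}\J(Q)^{q} &\le 2^{\beta(q'-q)n+n\GL_n(q')}\quad(q'<q).
\end{align*}
I apply these at $\beta=\alpha_q\mp\epsilon$ with $q'=q\pm\delta$, where $\alpha_q\coloneqq-\GL'(q^-)$ is a subgradient of $\GL$. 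Provided PF-regularity supplies $\GL_n(q)\ge\GL(q)-o(1)$ and the limsup definition supplies $\GL_n(q')\le\GL(q')+o(1)$ uniformly for all large $n$, the convex expansion $\GL(q\pm\delta)=\GL(q)\mp\alpha_q\delta+o(\delta)$ makes both right hand sides negligible compared with $2^{n\GL_n(q)}$. Hence at least half the total mass concentrates on $B_{\alpha_q+\epsilon}(n)\setminus B_{\alpha_q-\epsilon}(n)$, which after taking logarithms yields
\[
\underline F(\alpha_q+\epsilon)\ \ge\ q\alpha_q+\GL(q)-O(\epsilon).
\]

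In PF-case~(a) I would pick $q$ in the regularity window and choose $\delta$ small enough that $q\pm\delta$ still lie in it, so that $\GL_n\to\GL$ at all three points. Dividing the above lower bound by $\alpha_q+\epsilon$, letting $\epsilon\downarrow 0$ and then $q\uparrow\mathfrak q$ produces
\[
\underline F\ \ge\ q+\frac{\GL(q)}{\alpha_q}\ \longrightarrow\ \mathfrak q,
\]
the limit being forced by sandwiching $q+\GL(q)/\alpha_q$ between $q+\alpha^{*}(\mathfrak q-q)/\alpha_q$ and $\mathfrak q$ with $\alpha^{*}\coloneqq-\GL'(\mathfrak q^-)$; this also covers the degenerate case $\alpha^{*}=0$. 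In PF-case~(b) I work at $q=\mathfrak q$ directly: the hypothesis $\GL_n(\mathfrak q)\to 0$ supplies the required lower bound on the total mass, the limsup definition of $\GL$ gives the upper bounds on $\GL_n(\mathfrak q\pm\delta)$, and differentiability at $\mathfrak q$ combined with $\dim_\infty(\J)>0$ forces $\alpha^{*}>0$ (otherwise $\GL$ would be non-negative in a full neighbourhood of $\mathfrak q$, contradicting the definition of $\mathfrak q$), so the same pinch yields $\underline F\ge\mathfrak q$.

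The main technical obstacle I anticipate is ensuring that all the $o(1)$-terms above hold uniformly for \emph{all} large $n$, not merely along subsequences, since it is the liminf $\underline F$ and not the limsup that is being bounded from below. This is precisely the point at which the PF-regularity hypothesis $\underline\GL=\GL$ is indispensable, and the careful balancing of the two small parameters $\epsilon$ and $\delta$ against the convex expansion's $o(\delta)$-error will require some delicate bookkeeping.
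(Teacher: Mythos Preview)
Your approach is essentially the paper's: both reduce the theorem to $\underline F\ge\mathfrak q$ under PF-regularity and establish this via a Chebyshev concentration estimate for the tilted sums $\sum_Q\J(Q)^q$, which the paper packages as a large-deviation lemma (Lemma~\ref{lem:exponential decay-1}) applied to the Gibbs measures $\mu_n(\{Q\})=\J(Q)^q 2^{-n\GL_n(q)}$ and then feeds into Propositions~\ref{prop:GeneralBound.-1} and~\ref{prop:=00005CGL reg. implies lower bound}. Your direct factorisation $\J(Q)^q=\J(Q)^{q-q'}\J(Q)^{q'}$ is exactly the same Chebyshev step, just unpacked.

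Two small caveats. First, the two-sided expansion $\GL(q\pm\delta)=\GL(q)\mp\alpha_q\delta+o(\delta)$ with a \emph{single} $\alpha_q$ requires $\GL$ to be differentiable at $q$, not merely that $-\GL'(q^-)$ is a subgradient; you should therefore choose $q$ at a point of differentiability (dense in the window, and the paper does this explicitly in the proof of Proposition~\ref{prop:=00005CGL reg. implies lower bound}). Second, in case~(a) you do not need $q\pm\delta$ to lie in the regularity window---indeed $q+\delta$ will typically exceed $\mathfrak q$ and fall outside---since the upper bounds $\GL_n(q\pm\delta)\le\GL(q\pm\delta)+o(1)$ follow for all large $n$ directly from the definition $\GL=\limsup_n\GL_n$; only the lower bound $\GL_n(q)\ge\GL(q)-o(1)$ at the single point $q$ requires the PF-regularity hypothesis.
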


\begin{rem}
\label{rem:LowerBdLower}The above result is optimal in the sense
that there is an example of a measure $\nu$ (derived in the context
of for Kre\u{\i}n–Feller operators in dimension $d=1$ in \autocite{KN2022})
such that $\J_{\nu}\coloneqq\nu$ is not PF-regular and for which
$\overline{h}>\underline{h}$. It should be noted that PF-regularity
is often easily accessible if the spectral partition function is essentially
given by the $L^{q}$-spectrum of an underlying measure.

Recall the definition of $\underline{\q}$ in \prettyref{eq:Def_Of_underline_q}.
We have seen that $\underline{F}\leq\underline{\q}$ and one could
hope for equality in general. However, the lower bound is considerably
more challenging to estimate, and we are able to make the following
observation.
\end{rem}

\begin{prop}
\label{prop:LowerBoundLiminfFJ}Assuming that $\dim_{\infty}(\J)>0$,
let the convex function 
\[
c:q\mapsto\limsup_{n\to\infty}\tau_{\J,n}\left(q+q_{n}\right)
\]
be given, where $q_{n}$ denotes the only zero of $\GL_{n}$. Then
for $a\leq b<0$ such that the subdifferential $\partial c(0)$ of
$c$ in $0$ is equal to $\left[a,b\right]$ we have 
\[
\frac{b}{a}\,\underline{\q}\leq\underline{F}\leq\underline{\q}.
\]
\begin{rem}
This means that differentiability of $c$ in $0$ implies $\underline{F}=\underline{\q}$.
\end{rem}

\end{prop}

\begin{rem}
See \prettyref{exa:LowerBound} for an example, where the lower bound
in \prettyref{prop:LowerBoundLiminfFJ} is realised, i.\,e\@. $\nicefrac{b}{a}\;\underline{\q}=\underline{F}$.
\end{rem}

\subsection{Possible applications\label{subsec:Possible-applications}}

This paper is partially based on the second author's PhD thesis \cite{elib_6573}.

Let $\nu$ be a Borel measure on $\Q$ . A classical example for $\J$
would be $\nu$ restricted to $\D$. In \prettyref{cor:special_nu^s}
we will provide an illustrating example for $\J_{\nu,s}\left(Q\right)\coloneqq\nu\left(Q\right)^{s}$,
$Q\in\D$, $s>0$, that plays a crucial role in the context of spectral
asymptotics \cite{MR4331823,MR4484835}. In \cite{KN2022b}, $a,b\in\R$,
$b>0$, we studied
\begin{equation}
\J_{\nu,a,b}\left(Q\right)\coloneqq\begin{cases}
\sup\left\{ \nu\left(\widetilde{Q}\right)^{b}\left|\log\left(\Lambda\left(\widetilde{Q}\right)\right)\right|:\widetilde{Q}\in\mathcal{\D}\left(Q\right)\right\} , & a=0,\\
\sup\left\{ \nu\left(\widetilde{Q}\right)^{b}\Lambda\left(\widetilde{Q}\right)^{a}:\widetilde{Q}\in\D\left(Q\right)\right\} , & a\neq0,
\end{cases}\label{eq:J_nu,a,b}
\end{equation}
with $\mathcal{D}\left(Q\right)\coloneqq\left\{ Q'\in\mathcal{D}:Q'\subset Q\right\} $,
$Q\in\D$. Note that for $a>0$ this definition reduces to $\J_{\nu,a,b}\left(Q\right)=\nu\left(Q\right)^{b}\Lambda\left(Q\right)^{a}$.
For an appropriate choice of $a,b$ the set function $\J_{\nu,a,b}$
naturally arises in the optimal embedding constant for the embedding
of the standard Sobolev space $H^{1,2}$ in $L_{\nu}^{2}$. For $\Q\subset\R^{d}$
and $t\geq2$, we were particularly interested in $\J_{\nu,t}\coloneqq\J_{\nu,2/d-1,2/t}$
to investigate the spectral asymptotic of Kre{\u{\i}}n–Feller operators.
We note that the general parameter $a,b$ has also been shown to be
useful when considering \emph{polyharmonic operators} in higher dimensions
or \emph{approximation order} with respect to \emph{Kolmogorov, Gel\textquotesingle fand,}
or \emph{linear widths} as elaborated in \cite{MR4444736,KesseboehmerWiegmann}.
In these works, the deep connection to the original ideas of entropy
numbers introduced by Kolmogorov also becomes apparent. In \cite{KN22b,KN2024QDnegativeorder}
we address the quantization problem, that is the speed of approximation
of a compactly supported Borel probability measure by finitely supported
measures (see \cite{MR1764176} for an exposition), by adapting the
methods developed in \prettyref{sec:OptimalPartitions} and \prettyref{sec:Coarse-multifractal-analysis}
to $\J_{\nu,r,1}$ with $r>-\dim_{\infty}\left(\nu\right)$ to identify
the upper \emph{quantization dimension} \emph{of order} $r$ of $\nu$
with its \emph{Rényi dimension}.

\section{Basic properties of the partition function \label{sec:Partition-functions}}

Recall the definition in \prettyref{eq:DefGL} of the\emph{ }$\J$-partition
function $\GL$ as well as the critical values $\q$ and $\kappa$,
for which we give further observations: One easily checks that $\GL$
is \emph{scale invariant} in the sense that for $c>0$, we have $\GL_{c\J}=\GL_{\J}$.
\begin{lem}
\label{lem:GL(0)=00003DDim_M} We always have $\GL\left(0\right)\leq\overline{\dim}_{M}\left(\J\right)$,
and if $\DD=\D$, then $\GL\left(0\right)=\overline{\dim}_{M}\left(\J\right)$.
\end{lem}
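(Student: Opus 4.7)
The plan is to first rewrite the partition function at the origin. Using the convention $0^0=0$, one has $\GL_n(0) = \log\card(\DD_n^*)/\log(2^n)$ where $\DD_n^* \coloneqq \{Q \in \DD_n : \J(Q) > 0\}$, so the lemma reduces to comparing $\card(\DD_n^*)$ with $\card\{Q \in \D_n : Q \cap \supp(\J) \neq \emptyset\}$ up to multiplicative constants depending only on $d$.

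For the universal inequality $\GL(0) \leq \overline{\dim}_M(\J)$, I would fix $Q \in \DD_n^*$ and iterate the locally non-vanishing axiom to obtain a nested sequence $Q = Q_0 \supset Q_1 \supset \cdots$ with $Q_k \in \DD_{n+k}^*$. Compactness yields a point $x_Q \in \bigcap_k \overline{Q_k} \subset \overline{Q}$, and by the limsup form of the definition of the support, $x_Q \in \supp(\J)$. Since $\DD_n \subset \D_n$, this embeds $\DD_n^*$ into $\{Q \in \D_n : \overline{Q} \cap \supp(\J) \neq \emptyset\}$, whose cardinality differs from $\card\{Q \in \D_n : Q \cap \supp(\J) \neq \emptyset\}$ by a factor at most $2^d$. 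Dividing by $\log(2^n)$ and taking $\limsup$ gives $\GL(0) \leq \overline{\dim}_M(\supp(\J)) = \overline{\dim}_M(\J)$.

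For the reverse inequality in the classical case $\DD = \D$, I would start from $Q \in \D_n$ with $Q \cap \supp(\J) \neq \emptyset$, pick $x$ in the intersection, and use the definition of the support to find, for arbitrarily large $k \geq n$, a cube $Q' \in \D_k$ with $\J(Q') > 0$ and $x \in \overline{Q'}$. Let $\hat Q \in \D_n$ be the unique dyadic ancestor of $Q'$; monotonicity of $\J$, now crucially available on all of $\D$, gives $\J(\hat Q) \geq \J(Q') > 0$, so $\hat Q \in \D_n^*$. Since both $Q$ and $\hat Q$ contain $x$ in their closures and $\D_n$ partitions $\Q$, either $\hat Q = Q$ or $\hat Q$ is one of the at most $3^d - 1$ cubes of $\D_n$ sharing a boundary point with $Q$. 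Hence the assignment $Q \mapsto \hat Q$ is at most $3^d$-to-one, yielding $\card\{Q \in \D_n : Q \cap \supp(\J) \neq \emptyset\} \leq 3^d \card(\D_n^*)$; taking $\limsup$ gives $\overline{\dim}_M(\J) \leq \GL(0)$.

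The only delicate point is the bookkeeping of boundary effects due to the mixed open/half-open/closed convention for cubes: the point $x_Q$ produced by the nested construction may lie on $\partial Q$, and $Q$ and $\hat Q$ may only share a vertex, edge or face rather than an interior point. Both issues enter through constants depending on $d$, which are absorbed once we divide by $\log(2^n)$ and pass to the $\limsup$. The genuinely substantive input is the locally non-vanishing axiom, which produces a support point inside $\overline{Q}$ for every $Q \in \DD_n^*$; without it, the inclusion $\DD_n^* \subset \{Q \in \D_n : \overline{Q} \cap \supp(\J) \neq \emptyset\}$ would fail.
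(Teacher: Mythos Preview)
Your proposal is correct and follows essentially the same route as the paper: both directions hinge on the locally non-vanishing axiom to produce a support point in $\overline{Q}$ for each $Q\in\DD_n^*$, and on the neighbour-counting bound (at most $3^d$ dyadic $n$-cubes share a boundary point) to pass between the sets $\{Q:\overline Q\cap\supp(\J)\neq\emptyset\}$ and $\{Q:Q\cap\supp(\J)\neq\emptyset\}$. The only cosmetic difference is that the paper uses the constant $3^d$ throughout whereas you once write $2^d$; since any dimension-dependent constant is absorbed after dividing by $\log(2^n)$ and taking the $\limsup$, this does not affect the argument.
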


\begin{proof}
We first show that for $Q\in\DD_{n}$ with $\J\left(Q\right)>0$,
we have $\overline{Q}\cap\supp\left(\J\right)\neq\emptyset$. Indeed,
since $\J$ is locally non-vanishing there exists a subsequence $(n_{k})$
with $Q_{n_{k}}\in\DD_{n_{k}}$, $\J\left(Q_{n_{k}}\right)>0$ and
$Q_{n_{k}}\subset Q_{n_{k-1}}\subset Q.$ Since $\left(\overline{Q}_{n_{k}}\right)_{k}$
is a nested sequence of non-empty compact subsets of $\overline{Q}$
we have $\emptyset\neq\bigcap_{k\in\N}\overline{Q}_{n_{k}}\subset\supp\left(\J\right)\cap\overline{Q}$.
Therefore, 
\begin{align*}
\card\left\{ Q\in\DD_{n}:\J\left(Q\right)>0\right\}  & \leq\card\left\{ Q\in\DD_{n}:\overline{Q}\cap\supp\left(\J\right)\neq\emptyset\right\} \\
 & \leq3^{d}\card\left\{ Q\in\D_{n}:Q\cap\supp\left(\J\right)\neq\emptyset\right\} 
\end{align*}
implying $\GL\left(0\right)\leq\overline{\dim}_{M}\left(\J\right)$.

Now, assume $\DD_{n}=\D_{n}$. We observe that if $Q\in\D_{n},Q\cap\supp\left(\J\right)\neq\emptyset$,
then there exists $Q'\in\ \D_{n}$ with $\overline{Q'}\cap\overline{Q}\neq\emptyset$
and $\J(Q')>0$. This can be seen as follows: For $x\in Q\cap\supp\left(\J\right)$
there exists a subsequence $(n_{k})$ such that $x\in\overline{Q}_{n_{k}}$,
$Q_{n_{k}}\in\mathcal{\DD}_{n_{k}}$ and $\J(Q_{n_{k}})>0$. For $k\in\N$
such that $n_{k}\geq n$ there exists exactly one with $Q_{n_{k}}\subset Q'$.
Now, $x\in\overline{Q}_{n_{k}}\subset\overline{Q'}$, implying $\overline{Q'}\cap\overline{Q}\neq\emptyset$
and since $\J$ is monotone, we have $\J(Q')>0$. Furthermore, for
each $Q\in\DD_{n}$, we have $\card\left\{ Q''\in\D_{n}:\overline{Q''}\cap\overline{Q}\neq\emptyset\right\} \leq3^{d}$.
Combining these two observations, we obtain
\begin{align*}
\card\left\{ Q\in\D_{n}:Q\cap\supp\left(\J\right)\neq\emptyset\right\}  & \leq\card\left\{ Q\in\D_{n}:\exists Q'\in\D_{n},\overline{Q'}\cap\overline{Q}\neq\emptyset,\J(Q')>0\right\} \\
 & \leq3^{d}\card\left\{ Q\in\D_{n}:\J\left(Q\right)>0\right\} ,
\end{align*}
implying $\GL\left(0\right)\geq\overline{\dim}_{M}\left(\J\right)$.
\end{proof}
The definition of $\dim_{\infty}\left(\J\right)>0$ in \prettyref{eq:Def_Dim_infty}
immediately gives the following lemma.
\begin{lem}
\label{lem:Unifrom_Decreasing}If $\dim_{\infty}\left(\J\right)>0$,
then $\J$ is uniformly vanishing.
\end{lem}

\begin{lem}
\label{lem:uniformEstimatefortau_n}Under our standing assumption
with $a$ and $b$ as given in \prettyref{eq:uniform_proper_convex},
$L\coloneqq(b-d)/a<0$, for all $n$ large enough and $q\geq0$, we
have 
\[
b+qL\leq\GL_{n}\left(q\right).
\]
In particular, $-\infty<\liminf_{n\rightarrow\infty}\GL_{n}\left(q\right)$\textbf{
}and\textbf{ $\dim_{\infty}\left(\J\right)\leq-L$}
\end{lem}

\begin{proof}
By our assumptions we have $\dim_{\infty}\left(\J\right)>0$, therefore,
for $n$ large, $\GL_{n}$ is monotone decreasing and also $b\leq\GL_{n}(a)$.
By definition, we have $\GL_{n}(0)\leq d$ for all $n\in\N$ and the
convexity of $\GL_{n}$ implies for all $q\in[0,a]$
\[
\GL_{n}\left(q\right)\leq\GL_{n}(0)+\frac{q\left(\GL_{n}(a)-\GL_{n}(0)\right)}{a}.
\]
On the other hand, for $q>a$, the convexity of $\GL_{n}$ implies
\[
\frac{\left(\GL_{n}(a)-\GL_{n}(0)\right)}{a}\leq\frac{\left(\GL_{n}\left(q\right)-\GL_{n}(0)\right)}{q}
\]
and consequently, 
\begin{align*}
b+q(b-d)/a & \leq\GL_{n}(0)+\frac{q\left(\GL_{n}(a)-\GL_{n}(0)\right)}{a}\\
 & \leq\GL_{n}(0)+\frac{q\left(\GL_{n}\left(q\right)-\GL_{n}(0)\right)}{q}=\GL_{n}\left(q\right).
\end{align*}
Since $\GL_{n}$ is decreasing with $0\leq\GL_{n}(0)\leq d$ and $\GL_{n}(a)\geq b$,
we obtain for all $q\in[0,a]$
\[
b+q(b-d)/a\leq b\leq\GL_{n}(a)\leq\GL_{n}\left(q\right).
\]
\end{proof}
In the following lemma we use the convention $-\infty\cdot0=0$.
\begin{lem}
\label{lem:Dim00Inequality}For $q\geq0$, we have 
\begin{align}
-\dim_{\infty}\left(\J\right)q\leq\GL\left(q\right) & \leq\GL(0)-\dim_{\infty}\left(\J\right)q\label{eq:InequalitiesTauP}\\
 & \leq\overline{\dim}_{M}\left(\J\right)-\dim_{\infty}\left(\J\right)q.\nonumber 
\end{align}
Furthermore, 
\[
\dim_{\infty}\left(\J\right)>0\iff\q<\infty\implies\kappa=\q.
\]
\end{lem}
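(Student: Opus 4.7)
My plan is to establish the three-term inequality by sandwiching the partition sum $\sum_{Q\in\DD_{n}}\J(Q)^{q}$ between its largest term and that term multiplied by the number of non-zero entries, and then to extract both the equivalence $\dim_{\infty}(\J)>0 \iff \mathfrak{q}<\infty$ and the identity $\kappa=\mathfrak{q}$ from those bounds together with a routine geometric-series argument.

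Concretely, for each $n$ I would observe
\[
\J(\DD_{n})^{q} \;\leq\; \sum_{Q\in\DD_{n}}\J(Q)^{q} \;\leq\; \card\{Q\in\DD_{n}:\J(Q)>0\}\cdot\J(\DD_{n})^{q},
\]
then divide by $\log 2^{n}$ and take $\limsup$ as $n\to\infty$. Using $\limsup_{n}\log\J(\DD_{n})/\log 2^{n} = -\dim_{\infty}(\J)$ (which, for $q\geq 0$, contributes $-q\dim_{\infty}(\J)$) and the subadditivity $\limsup(a_{n}+b_{n})\leq\limsup a_{n}+\limsup b_{n}$ (valid because $\GL_{n}(0)\leq d$ is uniformly bounded), the sandwich yields the first two inequalities of \prettyref{eq:InequalitiesTauP}; the third is exactly \prettyref{lem:GL(0)=00003DDim_M}. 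The only bookkeeping point concerns $q=0$, which is handled by the convention $-\infty\cdot 0=0$ together with $\GL(0)\geq 0$.

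The equivalence then follows from these bounds. The upper bound combined with $\GL(0)\leq d$ gives $\GL(q)\leq d-\dim_{\infty}(\J)q<0$ for every $q>d/\dim_{\infty}(\J)$ whenever $\dim_{\infty}(\J)>0$, so $\mathfrak{q}\leq d/\dim_{\infty}(\J)<\infty$. Conversely, if $\mathfrak{q}<\infty$, any $q>\mathfrak{q}$ satisfies $\GL(q)<0$, and the left bound $-\dim_{\infty}(\J)q\leq\GL(q)<0$ then forces $\dim_{\infty}(\J)>0$.

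For the identity $\kappa=\mathfrak{q}$ under $\mathfrak{q}<\infty$, I would first note that $\mathfrak{q}\leq\kappa$ holds in general: if $S:=\sum_{Q\in\DD}\J(Q)^{q}<\infty$, then $\sum_{Q\in\DD_{n}}\J(Q)^{q}\leq S$ for every $n$, whence $\GL_{n}(q)\leq\log S/\log 2^{n}\to 0$ and $\GL(q)\leq 0$. For the reverse, I would fix $q>\mathfrak{q}$ and set $\delta:=-\GL(q)>0$; then $\GL_{n}(q)<-\delta/2$ eventually, so the geometrically summable bound $\sum_{Q\in\DD_{n}}\J(Q)^{q}\leq 2^{-n\delta/2}$ yields $\sum_{Q\in\DD}\J(Q)^{q}<\infty$ and hence $\kappa\leq q$. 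I do not anticipate any real obstacle; the one place that asks for care is the manipulation of the $\limsup$ in the upper bound and the treatment of cubes with $\J(Q)=0$, both of which reduce to the conventions already in force.
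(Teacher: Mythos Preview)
Your treatment of the inequality chain \eqref{eq:InequalitiesTauP} and of the equivalence $\dim_{\infty}(\J)>0\iff\mathfrak{q}<\infty$ matches the paper's proof essentially line for line: the same sandwich
\[
\J(\DD_{n})^{q}\;\leq\;\sum_{Q\in\DD_{n}}\J(Q)^{q}\;\leq\;\card\{Q\in\DD_{n}:\J(Q)>0\}\cdot\J(\DD_{n})^{q},
\]
the same passage to the $\limsup$, and the same reading of both implications off the resulting bounds.

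There is, however, a small but genuine gap in your argument for $\kappa=\mathfrak{q}$. From $\sum_{Q\in\DD}\J(Q)^{q}<\infty$ you correctly deduce $\GL(q)\leq 0$, but this only gives $\inf\{q:\GL(q)\leq 0\}\leq\kappa$, not $\mathfrak{q}=\inf\{q:\GL(q)<0\}\leq\kappa$; it is the \emph{strict} inequality that defines $\mathfrak{q}$. In fact the blanket claim ``$\mathfrak{q}\leq\kappa$ holds in general'' is false: the remark immediately following this lemma in the paper points to examples with $\kappa<\infty=\mathfrak{q}$. Under the hypothesis $\mathfrak{q}<\infty$ the fix is easy---if $\kappa<\mathfrak{q}$, your inequality together with the definition of $\mathfrak{q}$ forces $\GL\equiv 0$ on $(\kappa,\mathfrak{q})$, and convexity of $\GL$ combined with $\GL(q_{0})<0$ for some $q_{0}>\mathfrak{q}$ yields a contradiction---but this step must be made explicit. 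The paper handles this direction differently: it invokes strict monotonicity of $\GL$ (established in \prettyref{lem:strictlyDecreaing}) to get $\GL(q)>0$ for $0<q<\mathfrak{q}$, so that along a subsequence $\sum_{Q\in\DD_{n_{k}}}\J(Q)^{q}\geq 2^{n_{k}\delta}\to\infty$, whence $\kappa\geq q$ directly. Your geometric-series argument for the inequality $\kappa\leq\mathfrak{q}$ coincides with the paper's.
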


\begin{proof}
The first claim follows from the following simple inequalities
\[
q\log\left(\J\left(\mathcal{\DD}_{n}\right)\right)\leq\log\left(\sum_{Q\in\mathcal{\DD}_{n}}\J\left(Q\right)^{q}\right)\leq\log\left(\sum_{Q\in\mathcal{\DD}_{n},\J\left(Q\right)>0}1\right)+q\log\left(\J\left(\mathcal{\DD}_{n}\right)\right).
\]
Now, assume $\q<\infty$. It follows there exists $q>0$ such that
$\GL\left(q\right)<0$. Consequently, we obtain from \prettyref{eq:InequalitiesTauP}
$-\dim_{\infty}\left(\J\right)q\leq\GL\left(q\right)<0$, which gives
$\dim_{\infty}\left(\J\right)>0.$ Reversely, suppose $\dim_{\infty}(\J)>0$.
In the case $\dim_{\infty}\left(\J\right)=\infty$, using \prettyref{eq:InequalitiesTauP},
we have $\q=0$ due to $\GL\left(q\right)=-\infty$ for $q>0$. Now,
let us consider the case $0<\dim_{\infty}\left(\J\right)<\infty$.
Then it follows from \prettyref{eq:InequalitiesTauP} that $\GL\left(q\right)<0$
for all $q>\GL(0)/\dim_{\infty}\left(\J\right)$ which proves the
implication.

Now, assume $\q<\infty$. Then we have $\GL\left(q\right)<0$ for
all $q>\q$, and therefore, for every $\varepsilon>0$ with $\GL\left(q\right)<-\varepsilon<0$
and $n$ large enough, we obtain $\sum_{Q\in\mathcal{\DD}_{n}}\J\left(Q\right)^{q}\leq2^{-n\varepsilon},$
implying $\sum_{Q\in\mathcal{\DD}}\J\left(Q\right)^{q}<\infty.$ This
shows $\inf\left\{ q\geq0:\sum_{Q\in\mathcal{\DD}}\J\left(Q\right)^{q}<\infty\right\} \leq\q$.
For the reversed inequality we note that if $\q=0$, then the claimed
equality is clear. If, on the other hand, $\q>0$, then we necessarily
have $\dim_{\infty}\left(\J\right)<\infty$. Since, $\GL$ is decreasing,
convex and proper (see \prettyref{lem:strictlyDecreaing} below),
it follows that $\q$ is a zero of $\GL$ and for all $0<q<\q$ we
have $0<\GL\left(q\right)$. This implies that for every $0<\delta<\GL\left(q\right)$,
there is a subsequence $(n_{k})$ such that 
\[
2^{n_{k}\delta}\leq\sum_{Q\in\mathcal{\DD}_{n_{k}}}\J\left(Q\right)^{q}\:\text{implying \,}\infty=\sum_{k\in\N}\sum_{Q\in\mathcal{\DD}_{n_{k}}}\J\left(Q\right)^{q}\leq\sum_{Q\in\mathcal{\DD}}\J\left(Q\right)^{q}.
\]
Consequently, $\q\leq\inf\left\{ q\geq0:\sum_{Q\in\mathcal{\DD}}\J\left(Q\right)^{q}<\infty\right\} $.
\end{proof}
\begin{rem}
Note that in the case $\dim_{\infty}\left(\J\right)\leq0$, we deduce
from \prettyref{lem:Dim00Inequality} that $\GL\left(q\right)$ is
non-negative for $q\geq0$, hence $\q=\infty$. However, it is possible
that $\kappa<\infty.$ Indeed, in \cite{KN2022b} we give an example
of a measure $\nu$, where $\kappa_{\J_{\nu}}$ gives the precise
upper bound for the spectral dimension, while $\kappa_{\J_{\nu}}<\q_{\J_{\nu}}=\infty$.
\end{rem}

\begin{lem}
\label{lem:strictlyDecreaing}If $\dim_{\infty}\left(\J\right)\in(0,\infty)$,
then $\GL$ is a strictly decreasing real-valued convex function on
$\R_{\geq0}.$ In particular, if $\q>0$, then $\q$ is the only zero
of $\GL$.
\end{lem}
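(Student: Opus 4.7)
The plan has three pieces: finiteness, strict monotonicity, and uniqueness of the zero. Throughout, I would use the bounds from \prettyref{lem:Dim00Inequality} together with the convexity of $\GL$, which was already observed (a $\limsup$ of the convex functions $\GL_n$).

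First I would establish that $\GL$ is real-valued on $\R_{\geq 0}$. Under the assumption $0<\dim_\infty(\J)<\infty$, \prettyref{lem:Dim00Inequality} gives
\[
-\dim_\infty(\J)\,q \;\leq\; \GL(q) \;\leq\; \GL(0)-\dim_\infty(\J)\,q \;\leq\; d,
\]
so $\GL(q)\in\R$ for every $q\geq 0$. Combined with the already noted convexity, this means $\GL$ is a finite convex function on $\R_{\geq 0}$ and hence continuous on $(0,\infty)$.

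Next I would prove strict monotonicity by exploiting that $\GL(q)\to -\infty$ as $q\to\infty$, which follows from the upper bound above since $\dim_\infty(\J)>0$. Suppose for contradiction that there exist $0\leq q_1<q_2$ with $\GL(q_1)\leq \GL(q_2)$. Convexity forces the slopes to be non-decreasing, so for every $q_3>q_2$ one has
\[
\frac{\GL(q_3)-\GL(q_2)}{q_3-q_2} \;\geq\; \frac{\GL(q_2)-\GL(q_1)}{q_2-q_1} \;\geq\; 0,
\]
i.e.\ $\GL(q_3)\geq \GL(q_2)$ for all $q_3>q_2$. This contradicts $\GL(q)\to -\infty$, so $\GL$ is strictly decreasing on $\R_{\geq 0}$.

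Finally, for the zero statement, assume $\mathfrak{q}>0$; by \prettyref{lem:Dim00Inequality} we also have $\mathfrak{q}<\infty$, so $\mathfrak{q}$ lies in the open half-line where $\GL$ is continuous. By the definition of $\mathfrak{q}$ as an infimum, there is a sequence $q\downarrow \mathfrak{q}$ with $\GL(q)<0$, giving $\GL(\mathfrak{q})\leq 0$; for $q<\mathfrak{q}$ one has $\GL(q)\geq 0$ and letting $q\uparrow\mathfrak{q}$ yields $\GL(\mathfrak{q})\geq 0$. Hence $\GL(\mathfrak{q})=0$, and uniqueness of the zero follows from the strict monotonicity proved above. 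The only subtlety I anticipate is the one-sided continuity argument at $\mathfrak{q}$, which is resolved by $\mathfrak{q}\in(0,\infty)$; no deeper obstacle appears since everything reduces to standard facts about finite convex functions combined with the a priori bounds already available.
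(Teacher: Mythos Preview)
Your proof is correct and follows essentially the same route as the paper's: both rely on the bounds from \prettyref{lem:Dim00Inequality} to get finiteness and $\GL(q)\to-\infty$, then use convexity to derive strict monotonicity by contradiction, and finally invoke continuity on $(0,\infty)$ for the zero. The only cosmetic difference is that the paper first establishes weak monotonicity via $\J(Q)<1$ for large $n$ before upgrading to strictness, whereas you handle strictness in one step; this is a minor streamlining, not a different approach.
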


\begin{proof}
First, note that \prettyref{lem:Dim00Inequality} implies $\GL\left(q\right)\in\R$
for all $q\geq0$ and $\lim_{q\to\infty}\GL\left(q\right)=-\infty.$
Since $\dim_{\infty}\left(\J\right)>0$ it follows from \prettyref{lem:Unifrom_Decreasing}
that for $n$ large and all $Q\in\mathcal{\DD}_{n}$, we have $\J\left(Q\right)<1.$
Hence, $\GL$ is decreasing and as pointwise limit superior of convex
functions again convex. Now, we show that $\GL$ is strictly decreasing.
Assume there exist $0\leq q_{1}<q_{2}$ such that $\GL(q_{1})=\GL(q_{2}).$
Since $\GL$ is decreasing, we obtain $\GL(q_{1})=\GL\left(q\right)$
for all $q\in[q_{1},q_{2}].$ The convexity of $\GL$ implies $\GL\left(q\right)=\GL(q_{1})$
for all $q>q_{1}$ which contradicts $\lim_{q\to\infty}\GL\left(q\right)=-\infty.$
For the second claim note that, since $\GL$ is convex, it follows
that $\GL$ is continuous on $\R_{>0}$. Hence, we obtain $\GL(\q)=0$.
Finally, the uniqueness follows from the fact that $\GL$ is a finite
strictly decreasing function.
\end{proof}

\section{Optimal partitions, partition entropy and optimised coarse multifractal
dimension \label{sec:OptimalPartitions}}

\subsection{Bounds for the partition entropy}

As before, let $\mathsf{\mathscr{\J}:\DD\to\R_{\geq0}}$ be a non-trivial,
monotone, uniformly vanishing, and locally non-vanishing set function.
\begin{prop}
\label{prop:GeneralUpperBounds} For $0<1/x<j_{0}$, the growth rate
of $\card\left(G_{x}\right)$ gives rise to the following inequalities:
\begin{equation}
\overline{F}\leq\overline{h}\leq\kappa\leq\q,\qquad\;\underline{F}\le\underline{h}.\label{eq:MainChainThm}
\end{equation}
\end{prop}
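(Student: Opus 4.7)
The plan is to establish the four inequalities separately, starting with the two easy ones. The inequality $\kappa\leq\mathfrak{q}$ is purely definitional: for any $q>\mathfrak{q}$, $\GL(q)<0$ yields $\varepsilon>0$ and $N\in\N$ with $\sum_{Q\in\DD_{n}}\J(Q)^{q}\leq 2^{-n\varepsilon}$ for $n\geq N$, and summing this geometric tail in $n$ gives $\sum_{Q\in\DD}\J(Q)^{q}<\infty$; hence $\kappa\leq q$, and letting $q\searrow\mathfrak{q}$ delivers $\kappa\leq\mathfrak{q}$.

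For the central step $\overline{h}\leq\kappa$ I would combine a parent-counting argument with a Markov-type estimate. By definition each $Q\in G_{x}$ with $Q\in\DD_{n}$ has a parent $Q'\in\DD_{n-1}$ satisfying $Q'\supset Q$ and $\J(Q')\geq 1/x$, and any $Q'\in\DD$ has at most $2^{d}$ $\D$-children, so the assignment $Q\mapsto Q'$ is at most $2^{d}$-to-one and
\[
\card(G_{x})\leq 2^{d}\,\card\{Q'\in\DD:\J(Q')\geq 1/x\}\leq 2^{d}x^{q}\sum_{Q'\in\DD}\J(Q')^{q},
\]
the last step being Markov's inequality. For any $q>\kappa$ the rightmost sum is finite, so $\card(G_{x})\ll x^{q}$; taking logs, dividing by $\log x$ and sending $x\to\infty$ yields $\overline{h}\leq q$, hence $\overline{h}\leq\kappa$.

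Both multifractal inequalities rest on a single injection $B_{\alpha}(n)\hookrightarrow G_{2^{\alpha n}}$. Fixing $\alpha>0$, $x=2^{\alpha n}$ and $Q\in B_{\alpha}(n)$ (so $\J(Q)\geq 1/x>0$), local non-vanishing iteratively constructs a nested chain $Q=Q_{0}\supset Q_{1}\supset\cdots$ with $Q_{k}\in\DD_{n+k}$ and $\J(Q_{k})>0$, while uniform vanishing forces $\J(Q_{k})<1/x$ for some $k\geq 1$. Selecting the smallest such $k$ places $Q_{k}$ in $G_{x}$ (its $\DD$-parent is $Q_{k-1}$ with $\J(Q_{k-1})\geq 1/x$ by minimality), and disjointness of the cubes of $\D_{n}$ makes the assignment injective, so $M(2^{\alpha n})\geq\mathcal{N}_{\alpha}(n)$. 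Dividing $\log M(2^{\alpha n})$ by $\log 2^{\alpha n}=\alpha n\log 2$ and passing to $\limsup$ yields $\overline{F}(\alpha)/\alpha\leq\overline{h}$, hence $\overline{F}\leq\overline{h}$ after the supremum over $\alpha>0$.

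The hardest part will be the $\liminf$ counterpart: the bound $M(2^{\alpha n})\geq\mathcal{N}_{\alpha}(n)$ only controls $M$ along the sparse subsequence $x_{n}=2^{\alpha n}$, whereas $\underline{h}$ is a $\liminf$ over all $x$. I plan to handle this using monotonicity of $M$: for $x\in[2^{\alpha n},2^{\alpha(n+1)})$ one has $M(x)\geq M(2^{\alpha n})$ while $\log x\leq\alpha(n+1)\log 2$, so
\[
\frac{\log M(x)}{\log x}\geq\frac{n}{n+1}\cdot\frac{\log\mathcal{N}_{\alpha}(n)}{\alpha n\log 2}.
\]
Passing to $\liminf$ as $x\to\infty$ and then to $\sup_{\alpha>0}$ produces $\underline{F}\leq\underline{h}$, closing the proof.
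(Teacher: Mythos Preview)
Your proof is correct and follows essentially the same route as the paper: the parent-counting/Markov bound for $\overline{h}\leq\kappa$ and the injection $B_{\alpha}(n)\hookrightarrow G_{2^{\alpha n}}$ for the two multifractal inequalities are exactly the paper's arguments. The only cosmetic difference is in the $\underline{F}\leq\underline{h}$ step, where the paper applies the injection directly with threshold $1/x$ (noting $\J(Q)\geq 2^{-\alpha n_{x}}\geq 1/x$ to obtain $\mathcal{N}_{\alpha}(n_{x})\leq M(x)$) instead of invoking monotonicity of $M$; both variants give the same conclusion.
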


At this stage we would like to point out that in the next section
(\prettyref{prop:LowerBoundUpperSpecDim}), we will show equality
in the second chain of inequalities \prettyref{eq:MainChainThm} using
the coarse multifractal formalism under some mild additional assumptions
on $\J$.
\begin{proof}
Since $\J$ is uniformly vanishing, \prettyref{lem:Dim00Inequality}
gives $\kappa\leq\q$ (where equality holds if $\dim_{\infty}\left(\J\right)>0$,
otherwise $\q=\infty$). Hence, we only have to consider the case
$\kappa<\infty$. Let $0<1/x<j_{0}$. Setting $R_{x}\coloneqq\left\{ Q\in\DD:\J\left(Q\right)\geq1/x\right\} ,$we
note that, on the one hand, for $Q\in G_{x}$ there is exactly one
$Q'\in R_{x}\cap\DD_{\left|\log_{2}\Lambda\left(Q\right)\right|/d-1}$
with $Q\subset Q'$ and, on the other hand, for each $Q'\in R_{x}\cap\DD_{\left|\log_{2}\Lambda\left(Q\right)\right|/d-1}$
there are at most $2^{d}$ elements of $G_{x}\cap\DD_{\left|\log_{2}\Lambda\left(Q\right)\right|/d}$
which are subsets of $Q'$. Hence, $\card\left(G_{x}\cap\DD_{n}\right)\leq2^{d}\card\left(R_{x}\cap\DD_{n-1}\right)$.
For $q>\kappa$ we obtain
\begin{align*}
x^{-q}\card\left(G_{x}\right) & =\sum_{n=1}^{\infty}\sum_{Q\in G_{x}\cap\DD_{n}}x^{-q}\leq2^{d}\sum_{n=1}^{\infty}\sum_{Q\in R_{x}\cap\DD_{n-1}}x^{-q}\\
 & \leq2^{d}\sum_{n=1}^{\infty}\sum_{Q\in R_{x}\cap\DD_{n-1}}\J\left(Q\right)^{q}\leq2^{d}\sum_{n=0}^{\infty}\sum_{Q\in\DD_{n}}\J\left(Q\right)^{q}<\infty.
\end{align*}
This implies
\[
\limsup_{x\to\infty}\frac{\log\left(M\left(x\right)\right)}{\log(x)}\leq q
\]
and letting $q$ tend to $\kappa$ proves $\overline{h}\leq\kappa$.
To prove the first inequality, observe that for $\alpha>0$, $n\in\N$
we have
\begin{equation}
N_{\alpha}\left(n\right)=\card\left\{ Q\in\DD_{n}:\J\left(Q\right)\geq2^{-\alpha n}\right\} \leq\card\left(G_{2^{\alpha n}}\right)=M\left(2^{\alpha n}\right),\label{eq:Nalpha(n)}
\end{equation}
where we used the fact that, since $\J$ is uniformly vanishing and
locally non-vanishing, for each $Q\in\DD_{n}$ with $\J\left(Q\right)\geq2^{-\alpha n}$
there exists at least one $Q'\in\DD\left(Q\right)\cap G_{2^{\alpha n}}$
and this assignment is injective. Taking logarithms, dividing by $\alpha n\log\left(2\right)$,
taking the limit superior with respect to $n$ and then the supremum
over all $\alpha>0$ gives $\overline{F}\leq\overline{h}$.

It remains to prove $\underline{F}\leq\underline{h}$. For fixed $\alpha>0$,
there exists $n_{x}\in\N$ such that $2^{-\left(n_{x}+1\right)\alpha}<1/x\leq2^{-n_{x}\alpha}$
and by \prettyref{eq:Nalpha(n)} we have $N_{\alpha}\left(n_{x}\right)\leq M\left(x\right).$
Therefore, 
\begin{align*}
\liminf_{n\rightarrow\infty}\frac{\log\left(N_{\alpha}\left(n\right)\right)}{\alpha\log\left(2^{n}\right)} & \leq\liminf_{x\rightarrow\infty}\frac{\log\left(N_{\alpha}\left(n_{x}\right)\right)}{\log(x)}\leq\liminf_{x\rightarrow\infty}\frac{\log\left(M\left(x\right)\right)}{\log(x)}=\underline{h}
\end{align*}
and taking the supremum over $\alpha>0$ gives $\underline{F}\leq\underline{h}$.
\end{proof}
\begin{rem}
\label{rem:AdaptiveApproxAlgorithm} We provide a two-dimensional
illustration in \prettyref{fig:PartitionAlgo} of these partitions
$G_{x}$ for three different values of $x>1$ for the particular choice
$\J(Q)=\left(\nu\varotimes\nu\right)(Q)\Lambda(Q)^{2}$, $Q\in\mathcal{D}$,
where $\nu$ denotes the $\left(p,1-p\right)$-Cantor measure supported
on the triadic Cantor set.
\end{rem}

In general, it is difficult to determine an upper bound for the lower
$\J$-partition entropy; the following proposition opens up a feasible
condition which we used \cite{KN2022b} to construct an Kre\u{\i}n–Feller
operator for which the spectral dimension does not exist. To obtain
meaningful bounds in the following theorem, it is important that $\J|_{\DD_{n}}$
does not vary too much on a suitable subsequence.
\begin{prop}
\label{prop:liminfEstimate} Suppose there exist sequences $\left(n_{k}\right)_{k\in\N}\in\N^{\N}$
and $\left(x_{k}\right)\in\R_{>0}^{\N}$ such that for all $k\in\N$,
$\J\left(\DD_{n_{k}}\right)<1/x_{k}$. Then we have 
\[
\underline{h}\leq\liminf_{k\rightarrow\infty}\frac{\log\left(\card\DD_{n_{k}}\right)}{\log\left(x_{k}\right)}.
\]
\end{prop}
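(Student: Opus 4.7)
The plan is to bound $M(x_{k})=\card(G_{x_{k}})$ by a constant multiple of $\card(\DD_{n_{k}})$ for every $k$, and then pass to $\liminf_{k}$. One may assume $x_{k}\to\infty$, since $x_{k}>1/\J(\DD_{n_{k}})\geq 1/j_{n_{k}}$ and $\J$ is uniformly vanishing (otherwise the right-hand side is trivially infinite). Under $x_{k}\to\infty$, the general fact that the $\liminf$ along a subsequence dominates the $\liminf$ along the full sequence, applied to $f(x)=\log M(x)/\log x$, gives $\underline{h}\leq\liminf_{k}\log M(x_{k})/\log x_{k}$, so it suffices to control the right-hand side.

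I would then bound $M(x_{k})$ in two steps. \emph{Level bound:} I claim $G_{x_{k}}\subset\bigcup_{m\leq n_{k}}\DD_{m}$. Indeed, if $Q\in G_{x_{k}}\cap\DD_{m}$ with $m>n_{k}$, the definition of $G_{x_{k}}$ supplies a parent $Q'\in\DD_{m-1}$ with $\J(Q')\geq 1/x_{k}$; tracing the $\DD$-ancestor chain of $Q'$ up to a cube in $\DD_{n_{k}}$ and invoking the monotonicity of $\J$ would produce a cube in $\DD_{n_{k}}$ with value $\geq 1/x_{k}$, contradicting $\J(\DD_{n_{k}})<1/x_{k}$. \emph{Injection into $\DD_{n_{k}}$:} For each $Q\in G_{x_{k}}$ with $\J(Q)>0$, finitely many iterations of the locally non-vanishing property yield a descendant $\widetilde{Q}\in\DD_{n_{k}}$ with $\widetilde{Q}\subset Q$ and $\J(\widetilde{Q})>0$. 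Elements of $G_{x_{k}}$ are pairwise incomparable (if $Q_{1}\subsetneq Q_{2}\in G_{x_{k}}$, the unique parent of $Q_{1}$ in $\D$ lies inside $Q_{2}$ and so, by monotonicity, has $\J$-value less than $1/x_{k}$, contradicting $Q_{1}\in G_{x_{k}}$), hence pairwise disjoint as subsets of $\Q$, so the map $Q\mapsto\widetilde{Q}$ is injective. The finitely many $Q\in G_{x_{k}}$ with $\J(Q)=0$ are absorbed into a factor $2^{d}$ by attaching them to a $\J$-positive sibling (which exists because their common bad parent satisfies $\J\geq 1/x_{k}>0$ together with the locally non-vanishing property).

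Combining both steps gives $M(x_{k})\leq 2^{d}\,\card(\DD_{n_{k}})$; taking logarithms, dividing by $\log x_{k}$, and passing to $\liminf_{k\to\infty}$ yields the claimed inequality, since the constant $2^{d}$ contributes $\log(2^{d})/\log x_{k}\to 0$.

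The main obstacle is the level bound in the general setting $\DD\subsetneq\D$: monotonicity of $\J$ only propagates along containment chains that stay inside $\DD$, so one must argue (or invoke a structural feature of $\DD$ supplied by the intended applications) that the $\D$-ancestor chain of $Q'$ reaches $\DD_{n_{k}}$ within $\DD$. In the classical case $\DD=\D$ this is automatic and the proof is clean; in the general case the argument should still go through under the implicit consistency assumptions on $\DD$ used throughout the paper.
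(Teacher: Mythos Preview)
Your overall strategy matches the paper's---bound $M(x_{k})$ by (a constant times) $\card(\DD_{n_{k}})$ and pass to the $\liminf$---but the paper's proof is essentially one line: from $\max_{Q\in\DD_{n_{k}}}\J(Q)<1/x_{k}$ it records $M(x_{k})\leq\card(\DD_{n_{k}})$ and concludes. In the classical case $\DD=\D$ this bound is immediate once you have the level bound: $G_{x_{k}}$ is then a partition of $\Q$ by dyadic cubes of level at most $n_{k}$, so its cardinality is at most $2^{dn_{k}}=\card(\D_{n_{k}})$ by volume counting. Your explicit injection via the locally non-vanishing property, the separate bookkeeping for cubes with $\J(Q)=0$, and the resulting factor $2^{d}$ are therefore unnecessary in the classical case; they only buy something in the general setting $\DD\subsetneq\D$, where---as you correctly flag---both your argument and the paper's terse one need ancestors of bad cubes to remain in $\DD$.

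One actual slip: your justification that one may assume $x_{k}\to\infty$ has the inequality backwards. The hypothesis $\J(\DD_{n_{k}})<1/x_{k}$ gives $x_{k}<1/\J(\DD_{n_{k}})$, an \emph{upper} bound on $x_{k}$, not the lower bound $x_{k}>1/\J(\DD_{n_{k}})$ you write; so this line does not force $x_{k}\to\infty$. The paper does not address this point either and simply writes $\underline{h}\leq\liminf_{k}\log M(x_{k})/\log x_{k}$, tacitly presuming $x_{k}\to\infty$; your parenthetical ``otherwise the right-hand side is trivially infinite'' is the right way to dispose of the remaining cases, but the sentence leading up to it needs correcting.
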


\begin{proof}
Using $\max_{Q\in\DD_{n_{k}}}\J\left(Q\right)<1/x_{k}$ gives $M\left(x_{k}\right)\leq\card\left(\DD_{n_{k}}\right)$
and the claim follows by observing
\[
\underline{h}\leq\liminf_{k\rightarrow\infty}\frac{\log\left(M\left(x_{k}\right)\right)}{\log\left(x_{k}\right)}\leq\liminf_{k\rightarrow\infty}\frac{\log\left(\card\left(\DD_{n_{k}}\right)\right)}{\log\left(x_{k}\right)}.
\]
\end{proof}

\subsection{The dual problem\label{subsec:PartitionImprovemetBirmanSolomyak}}

This section is devoted to $\gamma_{n}\coloneqq\min_{P\in\Pi_{n}}\J\left(P\right).$
Using \prettyref{prop:GeneralUpperBounds}, we are able to extend
the class of set functions considered in \cite[Theorem 2.1]{MR0217487}
(i.\,e\@. we allow set functions $\J$ which are only assumed to
be non-trivial, non-negative, monotone and $\dim_{\infty}\left(\J\right)>0$).
Before we compare our results with the classical work, we provide
a proof of \prettyref{thm:ImproveBirmanSolomyak}.

\begin{proof}
[Proof of \prettyref{thm:ImproveBirmanSolomyak}] By the definition
of $\overline{h}$ in \prettyref{eq:Def_h_} we have for $h>\overline{h}$
and $n$ sufficiently large
\[
M\left(n^{1/h}\right)\leq n.
\]
This means that there exists $P\in\Pi_{n}$ such that $\J\left(P\right)<n^{-1/h}$
with $\card\left(P\right)\leq n$, and therefore, $\min_{P\in\Pi_{n}}\J\left(P\right)<n^{-1/h}$.
Thus, in tandem with \prettyref{thm:MainResult}, we see that $\overline{\alpha}\leq-1/\overline{h}=-1/\q$.
The upper bound $\overline{\alpha}\geq-1/\overline{h}$ holds clearly
for $\overline{\alpha}=0$. For $\overline{\alpha}\in\left[-\infty,0\right)$,
we choose $\alpha\in\left(\overline{\alpha},0\right)$. Then we have
\[
\min_{P\in\Pi_{n}}\J(P)<n^{\alpha}
\]
 for all large $n$. This implies $M\left(n^{-\alpha}\right)\leq n$,
which shows $\overline{h}\leq-1/\alpha$ and in particular for $\overline{\alpha}=-\infty$,
$\overline{h}=0$ and the upper bound follows. In the same way, one
shows $-1/\underline{h}=\underline{\alpha}$.
\end{proof}
For the remaining part of this section, we concentrate on special
choice $\DD=\D$ and $\J_{J,a}\left(Q\right)\coloneqq J\left(Q\right)\Lambda\left(Q\right)^{a}$,
$a>0$, $Q\in\D$, where $J$ is a non-trivial, non-negative, locally
non-vanishing, superadditive function on $\mathcal{D}$, that is,
if $Q\in\mathcal{D}$ is decomposed into a finite number of disjoint
cubes $\left(Q_{j}\right)_{j}$ of $\mathcal{D}$, then $\sum J\left(Q_{j}\right)\leq J\left(Q\right)$.
We are now interested in the decay rate of $\gamma_{\J_{J,a},n}$.
Upper estimates for $\gamma_{\J_{J,a},n}$ have first been obtained
in \parencite{MR0217487,Borzov1971}.

In the following we use the terminology as in \cite{MR4077830}. Let
$\Xi_{0}$ be a finite partition of $\Q$ of dyadic cubes from $\mathcal{D}$.
We say a partition $\Xi'$ of $\Q$ is an\emph{ elementary extension}
of $\Xi_{0}$ if it can be obtained by uniformly splitting some of
its cubes into $2^{d}$ equal sized disjoint cubes lying in $\mathcal{D}$.
We call a partition \emph{$\Xi$ dyadic subdivision }of an initial
partition \emph{$\Xi_{0}$ }if it is obtained from the partition \emph{$\Xi_{0}$}
with the help of a finite number of elementary extensions.
\begin{prop}
\label{prop:PartitionAlogrthmDueToSolandBirma}Let $\Xi_{0}$ be a
finite partition of $\Q$ with dyadic cubes from $\mathcal{D}$ and
suppose there exists $\varepsilon>0$ and a subset $\Xi_{0}^{'}\subset\Xi_{0}$
such that
\[
\sum_{Q\in\Xi_{0}\setminus\Xi_{0}^{'}}\Lambda\left(Q\right)\leq\varepsilon\:\text{\:and}\:\:\sum_{Q\in\Xi_{0}^{'}}J\left(Q\right)\leq\varepsilon.
\]
Let $\left(P_{k}\right)_{k\in\N}$ denote a sequence of dyadic partitions
obtained recursively as follows: set $P_{0}\coloneqq\Xi_{0}$ and,
for $k\in\N$, construct an elementary extension $P_{k}$ of $P_{k-1}$
by subdividing all cubes $Q\in P_{k-1}$, for which 
\[
\J_{J,a}\left(Q\right)\geq2^{-da}\eta_{a}\left(P_{k-1}\right)
\]
with $\eta_{a}\left(P_{k-1}\right)\coloneqq\J_{J,a}\left(P_{k-1}\right)$,
into $2^{d}$ equal sized cubes. Then, for all $k\in\N,$ we have
\[
\eta_{a}\left(P_{k}\right)=\J_{J,a}\left(P_{k}\right)\leq C\varepsilon^{\min(1,a)}\left(N_{k}-N_{0}\right)^{-(1+a)}J(\Q)
\]
with $N_{k}\coloneqq\card\left(P_{k}\right)$, $k\in\N_{0}$, and
the constant $C>0$ depends only on $a$ and $d$. In particular,
there exists $C'>0$ such that for all $n>N_{0}$,
\[
\gamma_{\J_{J,a},n}\leq C'J(\Q)\varepsilon^{\min(1,a)}n^{-(1+a)}.
\]
\end{prop}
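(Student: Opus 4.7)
\emph{Approach.} The plan follows the classical Birman--Solomyak/Borzov strategy: one controls the decay of $\eta_a(P_k)$ and the growth of $N_k-N_0$ simultaneously, exploiting the superadditivity of $J$ together with the two smallness conditions on $\Xi_0'$ and $\Xi_0\setminus\Xi_0'$. The conclusion on $\gamma_{\J_{J,a},n}$ will then follow by choosing $k$ with $N_{k-1}<n\leq N_k$.

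\emph{Step 1 (per-step contraction).} If $Q$ is subdivided into its dyadic children $Q_1,\ldots,Q_{2^d}$, then $\Lambda(Q_j)^a=2^{-da}\Lambda(Q)^a$ and superadditivity of $J$ gives $\sum_j J(Q_j)\leq J(Q)$, so
\[
\sum_{j=1}^{2^d}\J_{J,a}(Q_j)\leq 2^{-da}\,\J_{J,a}(Q).
\]
In particular every new child satisfies $\J_{J,a}(Q_j)\leq 2^{-da}\J_{J,a}(Q)\leq 2^{-da}\eta_a(P_{k-1})$, while every cube not subdivided in $P_{k-1}$ already has $\J_{J,a}(Q)<2^{-da}\eta_a(P_{k-1})$ by the splitting rule. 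Hence $\eta_a(P_k)\leq 2^{-da}\,\eta_a(P_{k-1})$.

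\emph{Step 2 (total-mass bound in terms of $\varepsilon$).} Iterating superadditivity, $\sum_{Q\subset Q_0,\,Q\in P_k}J(Q)\leq J(Q_0)$ for every $Q_0\in\Xi_0$, hence
\[
\sum_{Q\in P_k}\J_{J,a}(Q)\leq\sum_{Q_0\in\Xi_0'}J(Q_0)\Lambda(Q_0)^a+\sum_{Q_0\in\Xi_0\setminus\Xi_0'}J(Q_0)\Lambda(Q_0)^a.
\]
The first sum is bounded by $\varepsilon$ using $\Lambda(Q_0)\leq 1$ and $\sum_{Q_0\in\Xi_0'}J(Q_0)\leq\varepsilon$. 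For the second sum, bounding $J(Q_0)\leq J(\Q)$ and using $\sum_{Q_0\in\Xi_0\setminus\Xi_0'}\Lambda(Q_0)\leq\varepsilon$ gives a bound of order $J(\Q)\varepsilon$ when $a\geq 1$ (via $\Lambda(Q_0)^a\leq\Lambda(Q_0)$) and of order $J(\Q)\varepsilon^{a}$ when $0<a<1$ (via a H\"older interpolation comparing $\Lambda(Q_0)^a$ to $\Lambda(Q_0)$). This is precisely where the exponent $\min(1,a)$ enters.

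\emph{Step 3 (polynomial rate in $N_k$ and dual bound).} Each of the $(N_k-N_0)/(2^d-1)$ internal nodes of the refinement tree was subdivided at some step $\ell\leq k$, at which moment its $\J_{J,a}$-value was at least $2^{-da}\eta_a(P_{\ell-1})$, which by Step 1 is at least $2^{-da(k-\ell+2)}\eta_a(P_k)$. A level-by-level weighted count (summing the geometric factors $2^{-da\ell}$ against the level cardinalities) converts this into an estimate of the form $\eta_a(P_k)(N_k-N_0)^{1+a}\leq C_{a,d}\sum_{Q\in P_k}\J_{J,a}(Q)$; plugging in Step 2 yields the stated bound. For the dual statement, choose the smallest $k$ with $N_k\geq n$; since $N_k\leq 2^d N_{k-1}$, one has $N_k-N_0\asymp n$, and $\gamma_{\J_{J,a},n}\leq\eta_a(P_k)$ gives the conclusion.

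\emph{Main obstacle.} The delicate point is Step 3: turning the exponential per-step contraction into a polynomial-in-$N_k$ rate, since a single step of the algorithm may subdivide many cubes at once and the refinement tree can develop very heterogeneously across branches. A careful distribution of contributions across depths (weighting internal nodes by the $\eta_a(P_{\ell-1})$ at which they were split, and summing the resulting geometric series) is what produces the sharp exponent $1+a$. The second technical subtlety is reproducing the sharp factor $\varepsilon^{\min(1,a)}$ in Step 2, which requires treating the cases $a\geq 1$ and $0<a\leq 1$ separately.
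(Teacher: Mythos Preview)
The paper itself does not supply a proof of this proposition; it only cites Borzov (1971) and the second author's thesis. Your outline follows the correct classical strategy, and Step~1 is fine. The genuine gap is in Step~3: the displayed inequality
\[
\eta_a(P_k)\,(N_k-N_0)^{1+a}\;\leq\;C_{a,d}\sum_{Q\in P_k}\J_{J,a}(Q)
\]
is \emph{false}. Take $d=1$, $a=1$, $J=\Lambda$, $\Xi_0=\{\Q\}$. Then every cube of $P_{\ell-1}$ is split at every step, so $P_k=\D_k$, $N_k=2^{k}$, $\eta_a(P_k)=2^{-2k}$, while $\sum_{Q\in P_k}\J_{J,a}(Q)=2^{k}\cdot 2^{-2k}=2^{-k}$. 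The left-hand side is $\asymp 1$ and the right-hand side tends to $0$. The underlying reason is that $\sum_{Q}\J_{J,a}(Q)$ is \emph{not} conserved under refinement (it contracts by $2^{-da}$ at each split), so it cannot control a quantity that stays bounded below. Consequently your Step~2, which is devoted to bounding $\sum_{Q\in P_k}\J_{J,a}(Q)$, is a dead end for this purpose.

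What produces the exponent $1+a$ is a H\"older step applied at each level to the two \emph{separately} conserved quantities $\sum J(Q)$ and $\sum\Lambda(Q)$. If $m_\ell$ cubes are split at step $\ell$ and each satisfies $J(Q)\Lambda(Q)^a\geq\delta_\ell\coloneqq 2^{-da}\eta_a(P_{\ell-1})$, then summing $\delta_\ell^{1/(1+a)}\leq J(Q)^{1/(1+a)}\Lambda(Q)^{a/(1+a)}$ and applying H\"older with exponents $(1+a,\,(1+a)/a)$ gives
\[
m_\ell\;\leq\;\Bigl(\tfrac{A\,V^{a}}{\delta_\ell}\Bigr)^{1/(1+a)},
\]
where $A$ bounds $\sum J(Q)$ and $V$ bounds $\sum\Lambda(Q)$ over the split cubes. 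Splitting according to whether the cubes lie inside $\bigcup\Xi_0'$ (where $A\leq\varepsilon$, $V\leq 1$) or its complement (where $A\leq J(\Q)$, $V\leq\varepsilon$) gives $m_\ell\leq\bigl(\varepsilon^{1/(1+a)}+(J(\Q)\varepsilon^{a})^{1/(1+a)}\bigr)\delta_\ell^{-1/(1+a)}$. Now the contraction from Step~1 yields $\delta_\ell\geq 2^{da(k-\ell)}\eta_a(P_k)$ (your inequality there has the sign of the exponent reversed), so $\delta_\ell^{-1/(1+a)}\leq 2^{-da(k-\ell)/(1+a)}\eta_a(P_k)^{-1/(1+a)}$; summing the geometric series over $\ell$ and raising to the power $1+a$ gives the stated bound. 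This is the mechanism you allude to in your ``Main obstacle'' paragraph, but it does not factor through $\sum_{Q\in P_k}\J_{J,a}(Q)$.
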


\begin{proof}
A proof can be found in \cite{Borzov1971} or alternatively with further
details in \cite{elib_6573} based on the presentation of \cite{MR4077830}.
\end{proof}
\begin{defn}
\label{def:Singularfunction}We call $J$ a \emph{singular function
}with respect to $\Lambda$ if for every $\varepsilon>0$ there exists
a partitions $\Xi_{0}\subset\mathcal{D}$ of $\Q$ and a subset $\Xi'_{0}\subset\Xi_{0}$
such that
\[
\sum_{Q\in\Xi_{0}\setminus\Xi_{0}^{'}}\Lambda\left(Q\right)\leq\varepsilon\:\text{\:and}\:\:\sum_{Q\in\Xi_{0}^{'}}J\left(Q\right)\leq\varepsilon.
\]
\end{defn}

\begin{rem}
Since $\mathcal{D}$ is a semiring of sets, it follows that a measure
$\nu$ which is singular with respect to the Lebesgue measure, is
also singular as a function $J=\nu$ in the sense of \prettyref{def:Singularfunction}.
\end{rem}

As an immediate corollary of \prettyref{prop:PartitionAlogrthmDueToSolandBirma},
we obtain the following statement due to \cite{Borzov1971}.
\begin{cor}
We always have 
\[
\gamma_{\J_{J,a},n}=O\left(n^{-(1+a)}\right)\:\text{and}\:M_{\J_{J,a}}(x)=O\left(x^{1/(1+a)}\right).
\]
Additionally, if $J$ is singular, then
\[
\gamma_{\J_{J,a},n}=o\left(n^{-(1+a)}\right)\:\text{and}\:M_{\J_{J,a}}(x)=o\left(x^{1/(1+a)}\right).
\]
\end{cor}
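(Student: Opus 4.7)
The plan is to obtain both the $O$- and the $o$-estimates as immediate consequences of Proposition~\ref{prop:PartitionAlogrthmDueToSolandBirma}, by choosing the initial partition $\Xi_{0}$ appropriately, and then to transfer each bound on $\gamma_{n}$ into a bound on $M(x)$ via the duality recorded in Lemma~\ref{lem:DualClassics} together with the optimality of $G_{x}$ from Lemma~\ref{lem:GxOptimal}.

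For the unconditional estimate $\gamma_{\J_{J,a},n}=O(n^{-(1+a)})$, I would apply Proposition~\ref{prop:PartitionAlogrthmDueToSolandBirma} to the trivial starting partition $\Xi_{0}\coloneqq\{\Q\}$ with $\Xi'_{0}\coloneqq\emptyset$ and $\varepsilon\coloneqq 1$: the two side conditions are trivially satisfied, since $\sum_{Q\in\Xi_{0}\setminus\Xi'_{0}}\Lambda(Q)=\Lambda(\Q)=1$ and $\sum_{Q\in\Xi'_{0}}J(Q)=0$. Because $N_{0}=1$, the proposition yields a constant $\tilde{C}$ (depending only on $a,d$ and $J(\Q)$) with $\gamma_{\J_{J,a},n}\leq\tilde{C}\,n^{-(1+a)}$ for all $n>1$. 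Under the singularity hypothesis, I would instead fix an arbitrary $\varepsilon>0$ and use Definition~\ref{def:Singularfunction} to produce a partition $\Xi_{0}=\Xi_{0}(\varepsilon)$ for which both side conditions of Proposition~\ref{prop:PartitionAlogrthmDueToSolandBirma} hold with this $\varepsilon$. The proposition then supplies an integer $N_{0}(\varepsilon)$ and a universal constant $C'>0$ with
\[
\gamma_{\J_{J,a},n}\leq C'\,J(\Q)\,\varepsilon^{\min(1,a)}\,n^{-(1+a)}\qquad\text{for all }n>N_{0}(\varepsilon).
\]
Consequently $\limsup_{n\to\infty}\gamma_{\J_{J,a},n}\,n^{1+a}\leq C'\,J(\Q)\,\varepsilon^{\min(1,a)}$, and since $\varepsilon>0$ was arbitrary, letting $\varepsilon\searrow 0$ forces $\gamma_{\J_{J,a},n}=o(n^{-(1+a)})$.

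It remains to transfer these estimates to $M_{\J_{J,a}}(x)$. Whenever $\gamma_{\J,n}\leq A\,n^{-(1+a)}$ holds for all sufficiently large $n$, choosing $n\coloneqq\lfloor(Ax)^{1/(1+a)}\rfloor+2$ guarantees the strict inequality $\gamma_{n}<1/x$ for all large $x$; by Lemma~\ref{lem:DualClassics} this produces a partition $P\in\tilde{\Pi}_{n}$ with $\J(P)<1/x$, and Lemma~\ref{lem:GxOptimal} then forces $M(x)=\card(G_{x})\leq\card(P)\leq n$, so that $\limsup_{x\to\infty}M(x)\,x^{-1/(1+a)}\leq A^{1/(1+a)}$. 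Applying this with the unconditional constant $A=\tilde{C}$ proves $M_{\J_{J,a}}(x)=O(x^{1/(1+a)})$, and applying it with $A=C'\,J(\Q)\,\varepsilon^{\min(1,a)}$ followed by $\varepsilon\searrow 0$ yields $M_{\J_{J,a}}(x)=o(x^{1/(1+a)})$. I expect no serious obstacle: the mildly delicate points are the quantifier-chasing for the $o$-statement (the threshold $N_{0}$ depends on $\varepsilon$, but the factor $\varepsilon^{\min(1,a)}$ is uniform in $n>N_{0}(\varepsilon)$, so the bound on $\limsup\gamma_{n}\,n^{1+a}$ holds \emph{for every} $\varepsilon>0$ separately) and the routine slack in the choice of $n$ needed to turn the weak inequality for $\gamma_{n}$ into the strict one demanded by the duality.
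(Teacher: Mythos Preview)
Your proposal is correct and is exactly the argument the paper intends: the corollary is stated there as an ``immediate corollary of \prettyref{prop:PartitionAlogrthmDueToSolandBirma}'' with no further proof, and your write-up simply spells out the two natural choices of $\Xi_{0}$ (the trivial one for the $O$-bound, and one furnished by the singularity definition for the $o$-bound) and then converts the $\gamma_{n}$-estimates into $M(x)$-estimates via the optimality in \prettyref{lem:GxOptimal}.
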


\begin{rem}
\label{rem:BirmanImprovment}If $\GL_{\J_{J,a}}^{N}\left(q\right)<d(1-q(1+a))$
for some $q\in(0,1)$, then this estimate improves the corresponding
results of \cite[Theorem 2.1]{Borzov1971,MR0217487}, where only $\overline{\alpha}_{\J_{J,a}}\leq-(1+a)$
has been shown. Observe that $\GL_{\J_{J,a}}\left(q\right)=\GL_{J}\left(q\right)-adq$
for $q\geq0$ and $\GL_{J}(0)\leq d$. From the fact that $J$ is
superadditive, it follows that $\GL_{J}(1)\leq0$ and $q\mapsto\GL_{J}\left(q\right)$,
$q\geq0$ is decreasing. We only have to consider the case $\GL_{J}(1)>-\infty.$
Since $\GL_{J}$ is convex, for every $q\in[0,1]$, we deduce
\[
\GL_{\J_{J,a}}\left(q\right)=\GL_{J}\left(q\right)-adq\leq\GL_{J}(0)(1-q)-adq\leq d(1-q)-adq.
\]
This implies $\mathsf{\q}_{\J_{J,a}}\leq\GL_{J}(0)/(\GL_{J}(0)+ad)\leq1/(1+a)$.
From \prettyref{prop:GeneralUpperBounds} we deduce the improved upper
bounds
\[
\frac{-1}{\overline{h}_{\J_{J,a}}}=\frac{-1}{\q_{\J_{J,a}}}=\overline{\alpha}_{\J_{J,a}}\leq-\left(1+a\frac{d}{\overline{\dim}_{M}(J)}\right)\leq-(1+a).
\]
\end{rem}

\subsection{Coarse multifractal analysis \label{sec:Coarse-multifractal-analysis}}

Throughout this section let $\J$ be a non-trivial, non-negative,
monotone and locally non-vanishing set function defined on the set
of dyadic cubes $\mathcal{D}$ with $\dim_{\infty}\left(\J\right)>0$.

Recall the definition \ref{eq:Def:N_=00005Calpha(n)} of $N_{\alpha}$
and \ref{eq:Def_F_} of $\overline{F}$, $\underline{F}$.
\begin{lem}
For $\alpha\in\left(0,\dim_{\infty}\left(\J\right)\right)$ we have
\[
\overline{F}=\sup_{\alpha\geq\dim_{\infty}\left(\J\right)}\limsup_{n\to\infty}\frac{\log\left(N_{\alpha}\left(n\right)\right)}{\log\left(2^{n}\right)\alpha},\:\underline{F}=\sup_{\alpha\geq\dim_{\infty}\left(\J\right)}\liminf_{n\to\infty}\frac{\log\left(N_{\alpha}\left(n\right)\right)}{\log\left(2^{n}\right)\alpha}.
\]
\end{lem}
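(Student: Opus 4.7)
The plan is to unwind the definitions of $\dim_\infty(\J)$ and $\mathcal{N}_\alpha(n)$ and observe that the maximal value of $\J$ on $\DD_n$ decays faster than $2^{-\alpha n}$ whenever $\alpha$ is strictly below $\dim_\infty(\J)$, so the set $B_\alpha(n)$ must eventually be empty.

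First I would fix $\alpha \in (0, \dim_\infty(\J))$ and choose an intermediate value $\beta$ with $\alpha < \beta < \dim_\infty(\J)$. By the very definition
\[
\dim_\infty(\J) = \liminf_{n\to\infty} \frac{\log \J(\DD_n)}{-\log(2^n)},
\]
there exists $n_0$ such that for all $n \geq n_0$ one has $\log \J(\DD_n) / (-n \log 2) > \beta$, i.e.\ $\J(\DD_n) < 2^{-\beta n}$. (Here $\J(\DD_n) = \max_{Q \in \DD_n} \J(Q)$ by the convention fixed earlier in the paper; the case $\J(\DD_n) = 0$ is handled trivially by interpreting $\log 0 = -\infty$.) Since $\beta > \alpha$, for $n$ large enough we also have $2^{-\beta n} < 2^{-\alpha n}$, so every $Q \in \DD_n$ satisfies $\J(Q) \leq \J(\DD_n) < 2^{-\alpha n}$. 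This forces $B_\alpha(n) = \emptyset$ and hence $\mathcal{N}_\alpha(n) = 0$.

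For the "in particular" part I would use that $\log^+(0) = 0$, so that for every $\alpha \in (0, \dim_\infty(\J))$ both the $\limsup$ and $\liminf$ of $\log^+(\mathcal{N}_\alpha(n))/\log(2^n)$ equal $0$, and hence contribute $0/\alpha = 0$ to the suprema defining $\overline F$ and $\underline F$. Since these suprema are themselves non-negative (attained trivially in the limit as $\alpha \to \infty$ giving $0$, and bounded below by $0$), the values of $\alpha < \dim_\infty(\J)$ may be discarded, leaving
\[
\overline F = \sup_{\alpha \geq \dim_\infty(\J)} \limsup_{n\to\infty} \frac{\log \mathcal{N}_\alpha(n)}{\alpha \log(2^n)}, \qquad \underline F = \sup_{\alpha \geq \dim_\infty(\J)} \liminf_{n\to\infty} \frac{\log \mathcal{N}_\alpha(n)}{\alpha \log(2^n)},
\]
where $\log^+$ may be replaced by $\log$ because now $\mathcal{N}_\alpha(n) \geq 1$ can be arranged on a suitable subsequence (or else the corresponding quantity is $0$ anyway).

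There is essentially no obstacle here; this is a definition-chasing lemma. The only mild subtlety is the handling of the $\liminf$ in the definition of $\dim_\infty(\J)$: one must ensure that the chosen $\beta$ is a \emph{strict} lower bound so that the inequality $\J(\DD_n) < 2^{-\beta n}$ holds for all sufficiently large $n$ (not just along a subsequence), which is exactly what $\liminf$ provides once $\beta < \dim_\infty(\J)$.
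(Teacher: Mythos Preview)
Your proof is correct and follows essentially the same approach as the paper's own argument: both unwind the definition of $\dim_\infty(\J)$ to bound $\J(\DD_n)$ uniformly below $2^{-\alpha n}$ for large $n$, forcing $B_\alpha(n)=\emptyset$. The only cosmetic difference is that you introduce an intermediate value $\beta\in(\alpha,\dim_\infty(\J))$, whereas the paper fixes $\alpha$ and then concludes $\mathcal{N}_{\alpha'}(n)=0$ for all $\alpha'<\alpha$; these are equivalent reparametrisations of the same elementary observation.
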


\begin{proof}
For fixed $\alpha\in\left(0,\dim_{\infty}\left(\J\right)\right)$,
by the definition of $\dim_{\infty}\left(\J\right)$ in \prettyref{eq:Def_Dim_infty},
for $n$ large we have $\J\left(\DD_{n}\right)\leq2^{-\alpha n}$.
For every $0<\alpha'<\alpha$, it follows that $N_{\alpha',\J}\left(n\right)=0.$
This proves that the supremum in the definition \prettyref{eq:Def_F_}
of $\overline{F}$ and $\underline{F}$ is obtained for $\alpha\geq\dim_{\infty}\left(\J\right)$
and the claim follows.
\end{proof}
We need the following elementary observation from large deviation
theory which seems not to be standard in the relevant literature.
\begin{lem}
\label{lem:exponential_decay-1} Suppose $\left(X_{n}\right)_{n\in\N}$
are real-valued random variables on some probability spaces $\left(\Omega_{n},\mathcal{A}_{n},\mu_{n}\right)$
such that the rate function $\mathfrak{c}\left(t\right)\coloneqq\limsup_{n\to\infty}\mathfrak{c}_{n}\left(t\right)$
is a proper convex function with $\mathfrak{c}_{n}\left(t\right)\coloneqq a_{n}^{-1}\log\int\exp tX_{n}\d\mu_{n}$,
$t\in\R$, $a_{n}\rightarrow\text{\ensuremath{\infty}}$ and such
that $0$ belongs to the interior of the domain of finiteness $\left\{ t\in\R\colon\mathfrak{c}\left(t\right)<\infty\right\} $.
Let $I=(a,d)$ be an open interval containing the subdifferential
$\partial\mathfrak{c}\left(0\right)=[b,c]$ of $\mathfrak{c}$ in
$0$. Then there exists $r>0$ such that for all $n$ sufficiently
large, 
\[
\mu_{n}\left(a_{n}^{-1}X_{n}\notin I\right)\leq2\exp\left(-ra_{n}\right).
\]
\end{lem}

\begin{proof}
We assume that $\partial\mathfrak{c}\left(0\right)=\left[b,c\right]$
and $I=\left(a,d\right)$ with $a<b\leq c<d$. First note that the
assumptions ensure that $-\infty<b\leq c<\infty.$ We have by the
Chebychev inequality for all $q>0$,
\begin{align*}
\mu_{n}\left(a_{n}^{-1}X_{n}\geq d\right) & =\mu_{n}\left(qX_{n}\geq qa_{n}d\right)\leq\exp\left(-qa_{n}d\right)\int\exp\left(qX_{n}\right)\d\mu_{n},
\end{align*}
 implying 
\[
\limsup_{n\to\infty}a_{n}^{-1}\log\mu_{n}\left(a_{n}^{-1}X_{n}\geq d\right)\leq\inf_{q>0}\mathfrak{c}\left(q\right)-qd=\inf_{q\in\R}\mathfrak{c}\left(q\right)-qd\leq0,
\]
where the equality follows from the assumption $c<d$, $\mathfrak{c}\left(0\right)=0$
and $\mathfrak{c}\left(q\right)-qd\geq(c-d)q\geq0$ for all $q\leq0$,
$\mathfrak{c}(0)=0$, and the continuity of $\mathfrak{c}$ at $0$.
Similarly, we find 
\[
\limsup_{n\to\infty}a_{n}^{-1}\log\mu_{n}\left(a_{n}^{-1}X_{n}\leq a\right)\leq\inf_{q<0}\mathfrak{c}\left(q\right)-qa=\inf_{q\in\R}\mathfrak{c}\left(q\right)-qa.
\]
We are left to show that both upper bounds are negative. We show the
first case by contradiction – the other case follows in exactly the
same way. Assuming $\inf_{q\in\R}\mathfrak{c}\left(q\right)-qd=0$
implies for all $q\in\R$ that $\mathfrak{c}\left(q\right)-qd\geq0$,
or after rearranging, $\mathfrak{c}\left(q\right)-\mathfrak{c}\left(0\right)\geq dq$.
This means, according to the definition of the sub-differential, that
$d\in\partial\mathfrak{c}\left(0\right)$, contradicting our assumptions.
\end{proof}
\begin{prop}
\label{prop:GeneralBound.-1}For a subsequence $(n_{k})$ define the
convex function on $\R_{\geq0}$ by $B\coloneqq\limsup_{k\to\infty}\GL_{n_{k}}$
and for some $q\geq0$, we assume $B\left(q\right)=\lim_{k\to\infty}\GL_{n_{k}}\left(q\right)$
and set $\left[a',b'\right]\coloneqq-\partial B\left(q\right)$. Then
we have $a'\geq\dim_{\infty}\left(\J\right)$ and
\begin{align*}
\frac{a'q+B\left(q\right)}{b'} & \leq\sup_{\alpha>b'}\liminf_{k\to\infty}\frac{\log\left(N_{\alpha}\left(n_{k}\right)\right)}{\alpha\log\left(2^{n_{k}}\right)}\\
 & \leq\sup_{\alpha\geq\dim_{\infty}\left(\J\right)}\liminf_{k\to\infty}\frac{\log\left(N_{\alpha}\left(n_{k}\right)\right)}{\alpha\log\left(2^{n_{k}}\right)}=\sup_{\alpha>0}\liminf_{k\to\infty}\frac{\log\left(N_{\alpha}\left(n_{k}\right)\right)}{\alpha\log\left(2^{n_{k}}\right)}.
\end{align*}
Moreover, if $B\left(q\right)=\GL\left(q\right),$ then $[a,b]=-\partial\GL\left(q\right)\supset-\partial B\left(q\right)$
and if additionally $0\leq q\leq\q$, then
\[
\frac{aq+\GL\left(q\right)}{b}\leq\frac{a'q+B\left(q\right)}{b'}.
\]
\end{prop}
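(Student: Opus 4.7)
The plan is to apply the large-deviation Lemma~\ref{lem:exponential decay-1} to a Gibbs-type tilted probability measure on $\DD_{n_k}$, converting the convexity information in $\partial B(q)$ into a lower bound on the counting function $\mathcal{N}_\beta(n_k)$. Set $a_{n_k}\coloneqq n_k\log 2$, introduce the probability measure $\mu_k$ on $\{Q\in\DD_{n_k}:\J(Q)>0\}$ by $\mu_k(\{Q\})\coloneqq\J(Q)^q/Z_{n_k}$ with $Z_{n_k}\coloneqq\sum_Q\J(Q)^q=2^{n_k\GL_{n_k}(q)}$, and the random variable $X_k(Q)\coloneqq\log\J(Q)$. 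A direct calculation gives
\[
\mathfrak{c}_k(t)\;=\;\frac{1}{n_k\log 2}\log\int e^{tX_k}\,d\mu_k\;=\;\GL_{n_k}(q+t)-\GL_{n_k}(q),
\]
so taking $\limsup_k$ and using the hypothesis $B(q)=\lim_k\GL_{n_k}(q)$ yields the limiting rate function $\mathfrak{c}(t)=B(q+t)-B(q)$, which is proper convex with $\partial\mathfrak{c}(0)=\partial B(q)=[-b',-a']$ and $0$ in the interior of its domain of finiteness.

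Given $\alpha<a'$ and $\beta>b'$, the open interval $(-\beta,-\alpha)$ contains $\partial B(q)$, so Lemma~\ref{lem:exponential decay-1} supplies $r>0$ with $\mu_k(A_k)\geq 1-2\cdot 2^{-rn_k}$ for $k$ large, where $A_k\coloneqq\{Q\in\DD_{n_k}:2^{-\beta n_k}<\J(Q)<2^{-\alpha n_k}\}$. Since $q\geq 0$ and $\J(Q)<2^{-\alpha n_k}$ on $A_k$, each $Q\in A_k$ satisfies $\mu_k(\{Q\})\leq 2^{-\alpha q n_k}/Z_{n_k}$, hence $|A_k|\geq \mu_k(A_k)\cdot 2^{n_k(\alpha q+\GL_{n_k}(q))}$. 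The inclusion $A_k\subseteq B_\beta(n_k)$ yields $\mathcal{N}_\beta(n_k)\geq|A_k|$. Taking logarithms, dividing by $\beta n_k\log 2$, passing to $\liminf_k$ (using $\GL_{n_k}(q)\to B(q)$ and $\mu_k(A_k)\to 1$), and then letting $\alpha\nearrow a'$ gives
\[
\liminf_k\frac{\log\mathcal{N}_\beta(n_k)}{\beta n_k\log 2}\;\geq\;\frac{a'q+B(q)}{\beta}
\]
for every $\beta>b'$, whose supremum over $\beta>b'$ delivers the first non-trivial bound $\frac{a'q+B(q)}{b'}$.

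The bound $a'\geq\dim_\infty(\J)$ comes from the subgradient inequality $B(p)\geq B(q)-a'(p-q)$, which rearranges to $\liminf_{p\to\infty}B(p)/p\geq -a'$; since $B\leq\GL$ pointwise (subsequential limsup is bounded by full limsup) and $\GL(p)/p\to-\dim_\infty(\J)$ by Lemma~\ref{lem:Dim00Inequality}, we get $-a'\leq-\dim_\infty(\J)$. The subsequent inequality $\sup_{\beta>b'}\leq\sup_{\alpha\geq\dim_\infty(\J)}$ is just a smaller index set giving a smaller supremum, and the equality with $\sup_{\alpha>0}$ follows from the preceding lemma ($\mathcal{N}_\alpha(n)=0$ eventually when $\alpha<\dim_\infty(\J)$). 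For the ``moreover'' claim with $B(q)=\GL(q)$: any $v\in\partial B(q)$ satisfies $\GL(p)\geq B(p)\geq B(q)+v(p-q)=\GL(q)+v(p-q)$, so $v\in\partial\GL(q)$, giving $[-b',-a']\subseteq[-b,-a]$ and thus $a\leq a'\leq b'\leq b$. For $0\leq q\leq\mathfrak{q}$, monotonicity of $\GL$ gives $\GL(q)\geq 0$ and $a'\geq\dim_\infty(\J)>0$ gives $a'q\geq 0$, so $a'q+B(q)\geq 0$; combined with $aq\leq a'q$ and $b\geq b'$ this yields $\frac{aq+\GL(q)}{b}\leq\frac{a'q+B(q)}{b'}$.

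The main technical obstacle is verifying the hypotheses of Lemma~\ref{lem:exponential decay-1}, namely that $\mathfrak{c}$ is proper convex with $0$ in the interior of its domain of finiteness. For $q>0$ this is straightforward from the convexity and finiteness of $\GL_{n_k}$ on $\R_{\geq 0}$, which transfers under $\limsup$ to a neighbourhood $(-q,\infty)$ containing $0$. The boundary case $q=0$ needs a small limiting argument (e.g.\ replacing $q$ by $q+\epsilon$ and letting $\epsilon\downarrow 0$, using upper semicontinuity of the subdifferential of convex functions), but the rest of the argument is a direct consequence of the computation of the rate function.
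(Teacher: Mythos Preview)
Your proof is correct and essentially identical to the paper's: the same Gibbs-tilted measure $\mu_k(\{Q\})=\J(Q)^q/Z_{n_k}$, the same rate function $\mathfrak{c}(t)=B(q+t)-B(q)$, the same application of Lemma~\ref{lem:exponential decay-1} to the two-sided level set, and the same subgradient arguments for $a'\geq\dim_\infty(\J)$ and for $\partial B(q)\subset\partial\GL(q)$. If anything, you are slightly more careful than the paper in flagging the $q=0$ boundary issue and in spelling out the final fraction inequality, which the paper leaves implicit.
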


\begin{proof}
Without loss of generality we can assume $b'<\infty.$ Moreover, $\dim_{\infty}\left(\J\right)>0$
implies $b'\geq a'\geq\dim_{\infty}\left(\J\right)>0$. Indeed, observe
that $B$ is again a convex function on $\R$. Thus, by the definition
of the sub-differential, we have for all $x>0$,
\[
B\left(q\right)-a'(x-q)\leq B(x)\leq\GL(x)\leq-x\dim_{\infty}\left(\J\right)+d,
\]
which gives $a'\ge\dim_{\infty}\left(\J\right)>0$. Let $q\geq0$.
Now, for all $k\in\N$ and $s<a'\leq b'<t$, we have with $L_{n_{k}}^{s,t}\coloneqq\left\{ Q\in\DD_{n_{k}}:2^{-sn_{k}}>\J\left(Q\right)>2^{-tn_{k}}\right\} $
\begin{align*}
N_{t,\J}^{\DD}\left(n_{k}\right) & \geq\card L_{n_{k}}^{s,t}\geq\sum_{Q\in L_{n_{k}}^{s,t}}\J\left(Q\right)^{q}2^{sn_{k}q}=2^{sn_{k}q+n_{k}\GL_{n_{k}}\left(q\right)}\sum_{Q\in\DD_{n_{k}}}\1_{L_{n_{k}}^{s,t}}(Q)\J\left(Q\right)^{q}2^{-n_{k}\GL_{n_{k}}\left(q\right)}\\
 & =2^{sn_{k}q+n_{k}\GL_{n_{k}}\left(q\right)}\left(1-\sum_{Q\in\DD_{n_{k}}}\1_{\left(L_{n_{k}}^{s,t}\right)^{\complement}}(Q)\J\left(Q\right)^{q}2^{-n_{k}\GL_{n_{k}}\left(q\right)}\right).
\end{align*}
We use the lower large deviation principle for the process $X_{k}\left(Q\right)\coloneqq\log\left(\J\left(Q\right)\right)$
with probability measure on $\DD_{n_{k}}$ given by $\mu_{k}\left(\left\{ Q\right\} \right)\coloneqq\J\left(Q\right)^{q}2^{-n_{k}\GL_{n_{k}}\left(q\right)}$.
We find for the free energy function
\begin{align*}
\mathfrak{c}\left(x\right) & \coloneqq\limsup_{k\to\infty}\frac{\log\left(\mathbb{E}_{\mu_{k}}\left(\exp\left(xX_{k}\right)\right)\right)}{\log\left(2^{n_{k}}\right)}=\limsup_{k\to\infty}\frac{1}{\log\left(2^{n_{k}}\right)}\log\left(\sum_{Q\in\DD_{n_{k}}}\J\left(Q\right)^{x+q}/2^{n_{k}\GL_{n_{k}}\left(q\right)}\right)\\
 & =\limsup_{k\to\infty}\GL_{n_{k}}(q+x)-B\left(q\right)=B(x+q)-B\left(q\right),
\end{align*}
with $-\partial\mathfrak{c}\left(0\right)=\left[a',b'\right]\subset(s,t)$
and hence there exists a constant $r>0$ depending on $s,t$ and $q$
such that for $k$ large by \prettyref{lem:exponential_decay-1}
\[
\sum_{Q\in\DD_{n_{k}}}\1_{\left(L_{n_{k}}^{s,t}\right)^{\complement}}(Q)\J\left(Q\right)^{q}/2^{n_{k}\GL_{n_{k}}\left(q\right)}=\mu_{k}\left(\frac{X_{k}}{\log\left(2^{n_{k}}\right)}\notin(-t,-s)\right)\leq2\exp\left(-rn_{k}\right).
\]
Therefore, $\liminf_{k\to\infty}\log\left(N_{t}^{\DD}\left(n_{k}\right)\right)/\log\left(2^{n_{k}}\right)\geq sq+B\left(q\right)$
for all $s<a'$ and $t>b'$ and hence
\[
\sup_{t>b'}\liminf_{k\to\infty}\frac{\log\left(N_{t}^{\DD}\left(n_{k}\right)\right)}{t\log\left(2^{n_{k}}\right)}\geq\sup_{t>b'}\frac{a'q+B\left(q\right)}{t}=\frac{a'q+B\left(q\right)}{b'}.
\]
The fact that $-\partial\GL\left(q\right)\supset-\partial B\left(q\right)$
if $\GL\left(q\right)=B\left(q\right)$ follows immediately from the
inequality $\limsup_{k\to\infty}\GL_{n_{k}}\leq\GL$.
\end{proof}
\begin{prop}
\label{prop:=00005CGL_reg.implies_lower_bound}If $\J$ is \emph{PF-regular
with respect to $\DD_{n}$, then} $\underline{F}=\q.$
\end{prop}

\begin{proof}
Due to \prettyref{prop:GeneralUpperBounds}, we can restrict our attention
to the case $\q>0$. First, assume $\GL\left(q\right)=\liminf_{n\to\infty}\GL_{n}\left(q\right)$
for $q\in\left(\q-\epsilon,\q\right)$, for some $\varepsilon>0$
and set $[a,b]=-\partial\GL\left(\q\right)$. Then by the convexity
of $\GL$ we find for every $\epsilon\in\left(0,\q\right)$ an element
$q\in\left(\q-\epsilon,\q\right)$ such that $\GL$ is differentiable
in $q$ with $-\left(\GL\right)'\left(q\right)\in[b,b+\varepsilon]$
since the points where $\GL$ is differentiable on $(0,\infty)$ lie
dense in $(0,\infty)$ which follows from the fact that $\GL$ is
a decreasing function and the fact that the left-hand derivative of
the convex function $\GL$ is left-hand continuous and non-decreasing.
Then we have by \prettyref{prop:GeneralBound.-1}
\begin{align*}
\sup_{\alpha\geq\dim(\J)}\liminf_{n\to\infty}\frac{\log^{+}\left(N_{\alpha}\left(n\right)\right)}{\alpha\log\left(2^{n}\right)} & \geq\sup_{\alpha>-\GL'\left(q\right)}\liminf_{n\to\infty}\frac{\log\left(N_{\alpha}\left(n\right)\right)}{\alpha\log\left(2^{n}\right)}\\
 & \geq\frac{-\GL'\left(q\right)q+\GL\left(q\right)}{-\GL'\left(q\right)}\geq\frac{b\left(\q-\epsilon\right)}{b+\varepsilon}.
\end{align*}
Taking the limit $\epsilon\to0$ proves the claim in this situation.
The case that $\GL$ exists as a limit in $\q$ and is differentiable
in $\q$ is covered by \prettyref{prop:GeneralBound.-1}.
\end{proof}
\begin{prop}
\label{prop:LowerBoundUpperSpecDim}We have $\overline{F}=\q.$
\end{prop}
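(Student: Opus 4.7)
The inequality $\overline{F}\leq\mathfrak{q}$ is already contained in \prettyref{prop:GeneralUpperBounds}, so the plan is to establish the reverse bound $\overline{F}\geq\mathfrak{q}$. The case $\mathfrak{q}=0$ is immediate since $\overline{F}\geq 0$ by non-negativity of $\log^+$ and $\overline{F}\leq\mathfrak{q}=0$; we therefore assume $\mathfrak{q}>0$. By \prettyref{lem:strictlyDecreaing}, $\GL$ is then a finite, strictly decreasing, convex function on $\R_{\geq0}$ with $\mathfrak{q}$ as its unique zero.

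The key idea is to apply \prettyref{prop:GeneralBound.-1} along a suitable subsequence at a point of differentiability of $\GL$. Fix $q\in(0,\mathfrak{q})$ at which $\GL$ is differentiable — such points are dense in $(0,\mathfrak{q})$ since convex functions on an open interval are differentiable outside a countable set — and choose a subsequence $(n_k)$ with $\GL_{n_k}(q)\to\GL(q)$, which is possible because $\GL(q)=\limsup_n\GL_n(q)$. Set $B\coloneqq\limsup_k\GL_{n_k}$. Then $B\leq\GL$ pointwise, $B(q)=\GL(q)$, and $B$ is a finite convex function on $\R_{\geq 0}$ (finiteness from \prettyref{lem: uniformEstimatefortau_n}). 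For any $s\in\partial B(q)$, the chain
\[
\GL(x)\;\geq\;B(x)\;\geq\;B(q)+s(x-q)\;=\;\GL(q)+s(x-q)
\]
forces $s\in\partial\GL(q)=\{\GL'(q)\}$, hence $-\partial B(q)=\{-\GL'(q)\}$. Strict monotonicity together with convexity of $\GL$ guarantees $-\GL'(q)>0$, since otherwise the right derivative at $q$ would be non-negative and $\GL$ would fail to decrease beyond $q$.

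With $a'=b'=-\GL'(q)>0$ in the notation of \prettyref{prop:GeneralBound.-1}, we obtain
\[
q\;\leq\;q+\frac{\GL(q)}{-\GL'(q)}\;=\;\frac{-\GL'(q)\,q+B(q)}{-\GL'(q)}\;\leq\;\sup_{\alpha>-\GL'(q)}\liminf_{k\to\infty}\frac{\log\mathcal{N}_{\alpha}(n_k)}{\alpha\log(2^{n_k})}\;\leq\;\overline{F},
\]
where the final inequality uses $\liminf_{k}\log\mathcal{N}_\alpha(n_k)/(\alpha\log 2^{n_k})\leq\limsup_{n}\log^+\mathcal{N}_\alpha(n)/(\alpha\log 2^n)=\overline{F}(\alpha)/\alpha\leq\overline{F}$. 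Letting $q$ approach $\mathfrak{q}$ through differentiable points completes the argument.

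The only genuinely non-routine step is the subdifferential identification $-\partial B(q)=\{-\GL'(q)\}$, which crucially exploits both the differentiability of $\GL$ at $q$ \emph{and} the subsequence realising the $\limsup$ value $\GL(q)$; without both choices, one is left with the weaker estimate $(aq+\GL(q))/b$ for $[a,b]=-\partial\GL(q)$, which would still force $\overline{F}\geq\mathfrak{q}$ in the limit $q\nearrow\mathfrak{q}$ but with more delicate bookkeeping. Everything else reduces to invoking \prettyref{prop:GeneralBound.-1} and unwinding the definitions of $\overline{F}$ and $\overline{F}(\alpha)$.
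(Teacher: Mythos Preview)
Your argument is correct and, in fact, cleaner than the paper's own proof. Both proofs reduce the statement to an application of \prettyref{prop:GeneralBound.-1}, but the order of choices differs in a way that matters.

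The paper first selects the subsequence at the point $\mathfrak{q}$ itself, so that $\GL_{n_k}(\mathfrak{q})\to\GL(\mathfrak{q})=0$, and then invokes an Arzelà--Ascoli argument (uniform boundedness and equi-Lipschitz bounds on $[\mathfrak{q}-\delta,\mathfrak{q}+\delta]$ coming from \prettyref{lem: uniformEstimatefortau_n}) to upgrade this to uniform convergence of $\GL_{n_k}$ to a convex limit $B$ on a neighbourhood of $\mathfrak{q}$. Only then does it pick a nearby differentiability point $q$ of $B$, apply \prettyref{prop:GeneralBound.-1} there, and bound $\bigl(-B'(q)q+B(q)\bigr)/(-B'(q))\geq b(\mathfrak{q}-\varepsilon)/(b+\varepsilon)$ before sending $\varepsilon\to0$.

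You reverse the order: choose $q\in(0,\mathfrak{q})$ where $\GL$ is differentiable first, and only then extract a subsequence realising $\GL(q)=\lim_k\GL_{n_k}(q)$. The sandwich $B\leq\GL$, $B(q)=\GL(q)$ forces $\partial B(q)\subset\partial\GL(q)=\{\GL'(q)\}$, hence $a'=b'=-\GL'(q)$, and the bound collapses to $q+\GL(q)/(-\GL'(q))\geq q$. This completely sidesteps the compactness step: no uniform convergence, no Arzelà--Ascoli, no need to control $B$ on a whole interval. What the paper's approach buys is that the limit function $B$ is genuinely known on a neighbourhood (potentially useful if one wanted sharper information near $\mathfrak{q}$), whereas your argument only pins down $B$ at the single point $q$; but for the bare equality $\overline{F}=\mathfrak{q}$ your route is strictly more economical.
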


\begin{proof}
Due to \prettyref{prop:GeneralUpperBounds}, we can restrict our attention
to the case $\q>0$. First note that by \prettyref{lem:uniformEstimatefortau_n},
for $n$ large, the family of convex functions $\left(\GL_{n}\right)$
restricted to $\left[0,\q+1\right]$ only takes values in $\left[-\left(\q+1\right)L+b,d\right]$
and on any compact interval $\left[c,e\right]\subset\left(0,\q+1\right)$
we have for all $c\leq x\leq y\leq e$

\[
\frac{\GL_{n}\left(x\right)-\GL_{n}\left(0\right)}{x-0}\leq\frac{\GL_{n}\left(y\right)-\GL_{n}\left(x\right)}{y-x}\leq\frac{\GL_{n}\left(\q+1\right)-\GL_{n}\left(y\right)}{\q+1-y}.
\]
We obtain by \prettyref{lem:uniformEstimatefortau_n} and the fact
that $\GL_{n}(0)\leq d$\textbf{
\[
\frac{\left(\q+1\right)L+b-d}{c}\leq\frac{\GL_{n}\left(x\right)-\GL_{n}\left(0\right)}{x-0}
\]
}and
\[
\frac{\GL_{n}\left(\q+1\right)-\GL_{n}\left(y\right)}{\q+1-y}\leq\frac{d-\left(\q+1\right)L-b}{\q+1-e},
\]
which implies 
\[
\left|\GL_{n}\left(y\right)-\GL_{n}\left(x\right)\right|\leq\max\left\{ \frac{|b|-\left(\q+1\right)L+d}{c},\frac{d-\left(\q+1\right)L+|b|}{\q+1-e}\right\} \left|x-y\right|
\]
and hence $\left(\GL_{n}|_{\left[c,e\right]}\right)$ is uniformly
bounded and uniformly Lipschitz and thus by Arzelà–Ascoli relatively
compact. Using this fact, we find a subsequence $\left(n_{k}\right)$
such that $\lim_{k\to\infty}\GL_{n_{k}}\left(\q\right)=\limsup_{n\to\infty}\GL_{n}\left(\q\right)=0$
and $\GL_{n_{k}}$ converges uniformly to the proper convex function
$B$ on
\[
\left[\q-\delta,\q+\delta\right]\subset\left(0,\q+1\right),
\]
for $\delta$ sufficiently small. We put $[a,b]\coloneqq-\partial B\left(\q\right)$.
Since the points where $B$ is differentiable are dense and since
$B$ is convex, we find for every $\delta>\epsilon>0$ an element
$q\in\left(\q-\varepsilon,\q\right)$ such that $B$ is differentiable
in $q$ with $-B'\left(q\right)\in[b,b+\epsilon]$. Noting $B\leq\GL$,
we have $-B'\left(q\right)\geq\dim_{\infty}\left(\J\right)$. Hence,
from \prettyref{prop:GeneralBound.-1} we deduce 
\[
\sup_{\alpha\geq\dim_{\infty}\left(\J\right)}\limsup_{n\to\infty}\frac{\log\left(N_{\alpha}\left(n\right)\right)}{\alpha\log\left(2^{n}\right)}\geq\sup_{\alpha>-B'\left(q\right)}\limsup_{k\to\infty}\frac{\log\left(N_{\alpha}\left(n_{k}\right)\right)}{\alpha\log\left(2^{n_{k}}\right)}\geq\frac{-B'\left(q\right)q+B\left(q\right)}{-B'\left(q\right)}\geq\frac{b\left(\q-\varepsilon\right)}{b+\varepsilon}.
\]
Taking the limit $\epsilon\to0$ gives the assertion.
\end{proof}
\begin{proof}[Proof of \prettyref{prop:LowerBoundLiminfFJ}]
 Let $q_{n}$ and $\partial c(0)\eqqcolon[a,b]$ with $a\leq b<0$
be given as stated in the remark. Since
\[
\card\left\{ Q\in\DD_{n}:\J(Q)\geq2^{-nb}\right\} \leq2^{q_{n}nb}\sum_{Q\in\DD_{n}:\J(Q)\geq2^{-nb}}\J(Q)^{q_{n}}\leq2^{q_{n}nb}
\]
we infer
\[
\frac{\log\left(\card\left\{ Q\in\DD_{n}:\J(Q)\geq2^{-nb}\right\} \right)}{\log\left(2^{bn}\right)}\leq q_{n}
\]
proving $\underline{\q}\geq\underline{F}_{\J}$. Further, for $0<s<t$
\begin{align*}
\card\left\{ Q\in\DD_{n}:\J(Q)\geq2^{-nt}\right\}  & \geq\card\left\{ Q\in\DD_{n}:2^{^{-nq_{n}s}}\geq\J(Q)^{q_{n}}\geq2^{-nq_{n}t}\right\} \\
 & \geq2^{^{nq_{n}s}}\sum_{Q\in\DD_{n}:2^{^{-ns}}\geq\J(Q)\geq2^{-nt}}\J(Q)^{q_{n}}.
\end{align*}
Define $X_{n}\left(Q\right)\coloneqq\log\left(\J(Q)\right)$, $Q\in\DD_{n}$
and $\mu_{n}\left(\left\{ Q\right\} \right)\coloneqq\J(Q)^{q_{n}}$,
$a_{n}=n\log(2)$, then the convex rate function is given by 
\[
q\mapsto\int\e^{qX_{n}}\d\mu_{n}=\e^{\log(2^{n})\tau_{\J,n}(q+q_{n})}.
\]
With $a\leq b<0$ as above, by \prettyref{lem:exponential_decay-1}
we find $r>0$ such that 
\[
\mu_{n}\left(\left\{ X_{n}\notin(a-\delta,b+\delta)\right\} \right)\leq2^{-rn+1}.
\]
Therefore, we obtain 
\begin{align*}
\card\left\{ Q\in\DD_{n}:\J(Q)\geq2^{n(a-\delta)}\right\}  & \geq\card\left\{ Q\in\DD_{n}:2^{n(b+\text{\ensuremath{\delta)}}}\geq\J(Q)\geq2^{n(a-\delta)}\right\} ,\\
 & \geq2^{^{nq_{n}(b+\delta)}}\left(1-2^{-nr+1}\right),
\end{align*}
implying 
\[
\frac{\log(\card\left\{ Q\in\DD_{n}:\J(Q)\geq2^{n(a-\delta)}\right\} }{\log(2^{n})(a-\delta))}\geq\frac{b+\delta}{a-\delta}q_{n}+\frac{\log\left(1-2^{-nr+1}\right)}{\log(2^{n})(\delta+b)}.
\]
Consequently, we have 
\[
\underline{F}_{\J}\geq\frac{b}{a}\,\underline{\q}.
\]
\end{proof}
\begin{example}
\label{exa:LowerBound}We consider a probability measure $\nu$ on
$\Q$ such that for all $Q,Q'\in\D_{n}$ we have $\nu(Q)=\nu(Q')$,
$n\in\N$ and 
\[
0<\underline{\dim}_{M}(\nu)<\overline{\dim}_{M}(\nu).
\]
Such a measure $\nu$ is provided in \cite[Example 5.5]{KN2022} (Homogeneous
Cantor measure with non-converging $L^{q}$-spectrum with $p_{1}=1/2$).
Now, for fixed $a>0$ we set $\J\left(Q\right)\coloneqq\J_{\nu,a/d,1}\left(Q\right)=\nu\left(Q\right)\Lambda\left(Q\right)^{a/d}$
as in \prettyref{eq:J_nu,a,b}. Then, with $c$ given as in \prettyref{prop:LowerBoundLiminfFJ},
we have $\partial c(0)=\left[-a-\overline{\dim}_{M}(\nu),-a-\underline{\dim}_{M}\left(\nu\right)\right]$
and 
\[
\underline{F}_{\J}=\frac{a+\underline{\dim}_{M}\left(\nu\right)}{a+\overline{\dim}_{M}\left(\nu\right)}\,\underline{\q}<\underline{\q}.
\]
To see this, note that for all $q>0$, we have
\[
\tau_{\J,n}(q)=\frac{\log\left(\sum_{Q\in\D_{n}}\J(Q)^{q}\right)}{\log(2^{n})}=q\frac{\log\left(\max_{Q\in\D_{n}}\J(Q)\right)}{\log\left(2^{n}\right)}+\tau_{\J,n}(0).
\]
Using $\tau_{\J,n}\left(q_{n}\right)=0$ implies 
\[
q_{n}=\frac{\log\left(2^{n}\right)\tau_{\J,n}(0)}{-\log\left(\max_{Q\in\D_{n}}\J(Q)\right)}=\frac{\tau_{\J,n}(0)}{a+\tau_{\J,n}(0)}=1-\frac{a}{a+\tau_{\J,n}(0)}.
\]
Since $\sum_{Q\in\D_{n}}\nu(Q)=\card\left\{ \nu(Q)>0:Q\in\D_{n}\right\} \max_{Q\in\D_{n}}\nu(Q)=1$
we find\textbf{ }
\[
\frac{\log\max_{Q\in\D_{n}}\J(Q)}{-\log\left(2^{n}\right)}+a=\frac{\log\tau_{\nu,n}\left(0\right)}{\log\left(2^{n}\right)}.
\]
Taking the limes inferior and using the fact that $\tau_{\J,n}(0)=\tau_{\nu,n}(0)$
then gives 
\begin{align}
\underline{\q} & =1-\frac{a}{a+\liminf_{n\rightarrow\infty}\tau_{\J,n}(0)}=\frac{\underline{\dim}_{M}\left(\nu\right)}{a+\underline{\dim}_{M}\left(\nu\right)}.\label{eq:lowerMinkowski_infinity_dim}
\end{align}
With this, we obtain
\[
\tau_{\J,n}\left(q+q_{n}\right)=\left(q+q_{n}\right)\frac{\log\left(\max_{Q\in\D_{n}}\J(Q)\right)}{\log\left(2^{n}\right)}+\tau_{\J,n}(0)=q\frac{\log\left(\max_{Q\in\D_{n}}\J(Q)\right)}{\log\left(2^{n}\right)},
\]
showing that 
\[
c\left(q\right)=\limsup_{n\rightarrow\infty}\tau_{\J,n}(q+q_{n})=\begin{cases}
-q\left(\overline{\dim}_{M}\left(\nu\right)+a\right), & q<\text{0},\\
-q\left(\underline{\dim}_{M}\left(\nu\right)+a\right), & q\geq0
\end{cases}
\]
and consequently $\partial c(0)=\left[-\overline{\dim}_{M}(\nu)-a,-\underline{\dim}_{M}(\nu)-a\right]$.
By \prettyref{prop:LowerBoundLiminfFJ}, 
\[
\underline{F}\geq\frac{\underline{\dim}_{M}\left(\nu\right)+a}{\overline{\dim}_{M}\left(\nu\right)+a}\,\underline{\q}.
\]
For the reverse inequality, note that for $\alpha>\overline{\dim}_{M}(\nu)+a$,
$n$ large and for all $Q\in\D_{n}$, we have $\J(Q)\geq2^{-n\alpha}$
and therefore,
\[
\card\left\{ Q\in\D_{n}:\J(Q)\geq2^{-\alpha n}\right\} =\card\left\{ Q\in\D_{n}:\J(Q)>0\right\} .
\]
For $0<\alpha<\overline{\dim}_{M}(\nu)+a$ there exists a subsequence
$\left(n_{k}\right)_{k}$ such that for all $Q\in\D_{n_{k}}$, we
have $\J(Q)\leq2^{-n_{k}\alpha}$ implying
\[
\liminf_{n\to\infty}\card\left\{ Q\in\D_{n}:\J(Q)\geq2^{-\alpha n}\right\} =0.
\]
Using \prettyref{eq:lowerMinkowski_infinity_dim} for the last equality,
we finally obtain
\begin{align*}
\underline{F} & =\sup_{\alpha>0}\liminf_{n\rightarrow\infty}\frac{\log^{+}\left(\card\left\{ Q\in\D_{n}:\J(Q)\geq2^{-\alpha n}\right\} \right)}{\log\left(2^{\alpha n}\right)}\\
 & =\sup_{\alpha\geq\overline{\dim}_{M}(\nu)+a}\liminf_{n\rightarrow\infty}\frac{\log^{+}\left(\card\left\{ Q\in\D_{n}:\J(Q)\geq2^{-\alpha n}\right\} \right)}{\log\left(2^{\alpha n}\right)}\\
 & \leq\liminf_{n\rightarrow\infty}\frac{\log^{+}\left(\card\left\{ Q\in\D_{n_{k}}:\J(Q)>0\right\} \right)}{\log\left(2^{n}\right)\left(\overline{\dim}_{M}(\nu)+a\right)}=\frac{\underline{\dim}_{M}\left(\nu\right)}{\overline{\dim}_{M}\left(\nu\right)+a}=\frac{\underline{\dim}_{M}\left(\nu\right)+a}{\overline{\dim}_{M}\left(\nu\right)+a}\,\mathfrak{\underline{q}}.
\end{align*}
\end{example}

\section{Proof of main results}

Now we are in a position to state the remaining proofs of our main
results from \prettyref{subsec:Main-results}. 
\begin{proof}
[Proof of \prettyref{lem:GxOptimal}] The equality $G_{x}=\mathcal{G}_{m_{x}+1}$
follows from the definitions. Clearly, we have $\inf\left\{ \card\left(P\right):P\in\Pi,\J\left(P\right)<1/x\right\} \leq\card\left(G_{x}\right)$,
since $G_{x}$ is a partition of $\Q$ which is ensured by the monotonicity
of $\J$ and the assumption that $\J$ is uniformly vanishing. For
the inverse inequality let $P_{\text{opt}}\in\Pi$ be the minimising
partition, i.\,e\@. we have $\inf\left\{ \card\left(P\right):P\in\Pi,\J\left(P\right)<1/x\right\} =\card\left(P_{\text{opt}}\right)$.
To prove that $P_{\text{opt}}=G_{x}$ we assume that there exists
$Q\in P_{\text{opt}}$ such that $Q\subset Q'\in\D_{\left|\log_{2}\Lambda\left(Q\right)\right|/d-1}$
with $\J(Q')<1/x$. Then, $\tilde{P}\coloneqq\left\{ Q''\in P_{\text{opt}}:Q''\cap Q'=\emptyset\right\} \cup Q'$
is also partition of $\Q$ with 
\[
\card(\tilde{P})<\card(\tilde{P})+2^{d}-1\leq\card\left(P_{\text{opt}}\right)
\]
and $\J(\tilde{P})<1/x$, contracting the assumption of $P_{\text{opt}}$
being minimising. Hence, we have $P_{\text{opt}}=G_{x}$.
\end{proof}
\begin{proof}
[Proof of \prettyref{lem:DualClassics}] Clearly, $\tilde{\Pi}_{n}\supset\Pi_{n}$
and hence $\inf_{P\in\tilde{\Pi}_{n}}\J\left(P\right)\leq\inf_{P\in\Pi_{n}}\J\left(P\right).$
Now suppose $\inf_{P\in\tilde{\Pi}_{n}}\J\left(P\right)=x$. Then
for every $\epsilon>0$ we have $M\left(x+\epsilon\right)=\card\left(G_{x+\epsilon}\right)\leq n$.
This shows that $\inf_{P\in\Pi_{n}}\J\left(P\right)\leq x+\epsilon$.
Since $\epsilon>0$ was arbitrary we conclude $\inf_{P\in\tilde{\Pi}_{n}}\J\left(P\right)\geq\inf_{P\in\Pi_{n}}\J\left(P\right)$.
\end{proof}
\begin{proof}
[Proof of \prettyref{thm:MainResult}] The main theorem is now a
consequence of \prettyref{prop:GeneralUpperBounds} and \prettyref{prop:LowerBoundUpperSpecDim}.
\end{proof}
\begin{proof}
[Proof of \prettyref{prop:Geometric_Bounsd}] The bounds are an
immediate consequence of the convexity of $\GL$, the fact that $-\dim_{\infty}\left(\J\right)$
is maximal asymptotic direction of $\GL$ and that $\GL\left(0\right)\leq\overline{\dim}_{M}\left(\J\right)$,
as shown in \prettyref{lem:GL(0)=00003DDim_M}. The case $\q>1$ is
portrayed in \prettyref{fig:Moment-generating-function}.
[Proof of \prettyref{thm:LqRegularImpliesRegular}] The theorem
is now a consequence of \prettyref{thm:MainResult} and \prettyref{prop:=00005CGL_reg.implies_lower_bound}.
\end{proof}
\printbibliography

\end{document}